\definecolor{grey}{rgb}{.7,.7,.7}
\newcommand{\aplim}{\mathop{\text{\rm ap-lim}}}
\newcommand{\dist}{\mathop{\text{\rm dist}}}
\newcommand{\e}{\varepsilon}
\newcommand{\N}{\mathbb{N}}
\newcommand{\de}{\partial}
\newcommand{\cJ}{{\mathcal J}}
\newcommand{\cJt}{\widetilde{\mathcal J}}
\renewcommand{\phi}{\varphi}
\newcommand{\spt}{\mathop{\mathrm{spt}}}
\newcommand{\difsim}{\Delta}
\renewcommand{\div}{\mathop{\mathrm{div}}}
\renewcommand{\epsilon}{\varepsilon}
\def\C{\mathcal{C}}
\def\R{\mathbb{R}}
\def\H{\mathcal{H}}
\def\grad{\nabla}
\renewcommand{\implies}{\Rightarrow}
\theoremstyle{plain}
\newtheorem{thm}{Theorem}[section]
\newtheorem{lem}[thm]{Lemma}
\newtheorem{prop}[thm]{Proposition}
\theoremstyle{definition}
\newtheorem{defin}[thm]{Definition}
\theoremstyle{remark}
\newtheorem{ex}[thm]{Example}
\title[The prescribed mean curvature equation in weakly regular domains]{
The prescribed mean curvature equation in weakly regular domains}
\author{Gian Paolo Leonardi}
\address{Dipartimento di Scienze Fisiche, Informatiche e Matematiche, Universit{\`a} degli Studi di Modena e Reggio Emilia, Via Campi 213/b, I-41125 Modena, ITALY}
\email{gianpaolo.leonardi@unimore.it}
\author{Giorgio Saracco}
\address{Department Mathematik, Universit\"at Erlangen-N\"urnberg, Cauerst. 11, D-91058 Erlangen, Germany}
\email{saracco@math.fau.de}
\thanks{G.P. Leonardi and G. Saracco have been supported by GNAMPA projects: \textit{Problemi isoperimetrici e teoria della misura in spazi metrici} (2015) and \textit{Variational problems and geometric measure theory in metric spaces} (2016). G. Saracco has been also supported by the DFG Grant n. GZ:PR 1687/1-1}
\subjclass[2010]{Primary: 49K20, 35J93. Secondary: 49Q20}
\keywords{prescribed mean curvature; capillarity; weak normal trace; perimeter}
\begin{document}

\definecolor{eqeqeq}{rgb}{0.87,0.87,0.87}
\definecolor{ffffff}{rgb}{1.,1.,1.}
\definecolor{black}{rgb}{0.,0.,0.}
\definecolor{zzffzz}{rgb}{0.6,1.,0.6}
\definecolor{ttqqqq}{rgb}{0.2,0.,0.}
\definecolor{qqqqtt}{rgb}{0.,0.,0.2}
\definecolor{uuuuuu}{rgb}{0.26,0.26,0.26}

\begin{abstract}
We show that the characterization of existence and uniqueness up to vertical translations of solutions to the prescribed mean curvature equation, originally proved by Giusti in the smooth case, holds true for domains satisfying very mild regularity assumptions. Our results apply in particular to the non-parametric solutions of the capillary problem for perfectly wetting fluids in zero gravity. Among the essential tools used in the proofs, we mention a \textit{generalized Gauss-Green theorem} based on the construction of the weak normal trace of a vector field with bounded divergence, in the spirit of classical results due to Anzellotti, and a \textit{weak Young's law} for $(\Lambda,r_{0})$-minimizers of the perimeter. 
\end{abstract}

 \hspace{-3cm}
 {
 \begin{minipage}[t]{0.6\linewidth}
 \begin{scriptsize}
 \vspace{-2cm}
 This is a pre-print of an article published in \emph{NoDEA Nonlinear Differ. Equ. Appl.}. The final authenticated version is available online at: http://dx.doi.org/10.1007/s00030-018-0500-3
 \end{scriptsize}
\end{minipage} 
}

\maketitle

\section*{Introduction}

Let $\Omega$ be an open bounded set in $\R^{n}$ and let $H:\Omega\to \R$ be a Lipschitz continuous function. A classical solution to the Prescribed Mean Curvature equation is a function $u:\Omega\to \R$ of class $C^{2}$ satisfying 
\begin{equation}\label{PMC}\tag{PMC}
\div \left(\frac{\nabla u(x)}{\sqrt{1+|\nabla u(x)|^{2}}}\right) = H(x) \qquad  \forall\, x\in \Omega\,.
\end{equation}
The left-hand side of \eqref{PMC} corresponds to the mean curvature of the graph of $u$ at the point $(x,u(x))$. The existence and the properties of  solutions to \eqref{PMC}, possibly satisfying some given boundary conditions, have been the object of extensive studies in the past, also due to the close connection between \eqref{PMC} and capillarity. After the pioneering works by Young \cite{Young}, Laplace \cite{Laplace}, and Gauss \cite{Gauss}, it is nowadays a well-known fact that the mean curvature of a capillary surface in a cylindrical container with cross-section $\Omega$ is determined by the surface tension, by the wetting properties of the fluid with respect to the container, and by the presence of external forces such as gravity. The modern theory of capillarity has its roots in a series of fundamental papers by Finn \cite{Finn1965}, Concus-Finn \cite{Concus1969,Concus1974,Concus1974a}, Emmer \cite{Emmer1973,Emmer1976}, Gerhardt \cite{gerhardt1974existence,gerhardt1975capillarity,gerhardt1976global}, Giaquinta \cite{Giaquinta1974}, Giusti \cite{Giusti1976,Giusti1978}, and many others (see \cite{Finn1986} and the references therein). Other contributions to the theory have been obtained in various directions, see for instance Tam \cite{tam1986existence,tam1986regularity}, Finn \cite{Finn1988}, Concus-Finn 
\cite{Concus1991}, Caffarelli-Friedman \cite{caffarelli1985regularity}, as well as more recent works by De Philippis-Maggi \cite{DePhilippisMaggi2015}, Caffarelli-Mellet \cite{caffarelli2007capillary} and Lancaster \cite{Lancaster2010}. However the above list is far from being complete.

A necessary condition on the pair $(\Omega,H)$ for the existence of a solution to \eqref{PMC} can be easily found by integrating \eqref{PMC} on any relatively compact set $A\subset \Omega$ with smooth boundary. Indeed, by applying the divergence theorem we get
\[
\Big | \int_A H\ dx \Big | \le \int_{\de A} |\langle Tu,\nu\rangle|\, d\H^{n-1}\,,
\]
where $\nu$ is the exterior normal to $\de A$ and $\H^{n-1}$ is the Hausdorff $(n-1)$-dimensional measure in $\R^{n}$. Then using the fact that the vector field
\[
Tu(x) := \frac{\nabla u(x)}{\sqrt{1+|\nabla u(x)|^{2}}}
\] 
has modulus less than $1$ on $\Omega$, we obtain for every such $A$ the strict inequality 
\begin{equation}\label{eq:necessarycondition}
\Big | \int_A H\ dx \Big | < P(A),
\end{equation}
where $P(A)$ denotes the perimeter of $A$ (when $\de A$ is smooth, $P(A) = \H^{n-1}(\de A)$; more generally, $P(A)$ has to be understood in the sense of Definition \ref{def:per}). 

Notice that whenever $H$ is a non-negative constant on $\Omega$ one obtains the necessary condition $H< \frac{P(A)}{|A|}$ for all relatively compact subsets $A \subset \Omega$ with positive volume. Hence, the existence of solutions to \eqref{PMC} is closely related to the so-called \textit{Cheeger problem}, which consists in minimizing the quotient $\frac{P(A)}{|A|}$ among all $A\subset\subset \Omega$ (see for instance the review papers \cite{Leo} and \cite{Parini2011}, and references therein). 

In the fundamental paper \cite{Giusti1978}, Giusti proved that the necessary condition \eqref{eq:necessarycondition} is also sufficient for the existence of solutions to \eqref{PMC} in any bounded connected open set $\Omega$ with Lipschitz boundary. More specifically, he showed that if \eqref{eq:necessarycondition} holds together with the strict inequality
\begin{equation}\label{eq:nonext}
\left|\int_{\Omega}H\, dx\right| < P(\Omega)
\end{equation}
then one can find many variational solutions (see \cite{Giaquinta1974}) attaining any given Dirichlet $L^1(\de \Omega)$ boundary datum in a weak sense. On the other hand, a much more subtle situation occurs when the equality
\begin{equation}\label{eq:ext}
\left|\int_{\Omega}H\, dx\right| = P(\Omega)
\end{equation}
holds, as it corresponds to the so-called \textit{extremal case}. Whenever the pair $(\Omega, H)$ is such that both \eqref{eq:necessarycondition} and \eqref{eq:ext} are satisfied, we will call the pair \emph{extremal}.

Concerning the existence of solutions to \eqref{PMC} in the extremal case, one can essentially consider a suitably translated sequence of variational (non-extremal) solutions $u_{i}$ of \eqref{PMC}, defined on subsets $\Omega_{i}$ that converge to $\Omega$ both in volume and in perimeter, as $i\to\infty$. Then, one obtains a so-called \textit{generalized solution} $u$ defined on $\Omega$ as the limit of $u_{i}$ (in the sense of the $L^{1}$-convergence of the subgraphs, see \cite{Miranda1977}). The extremal case is particularly relevant because it corresponds to capillarity for a perfectly wetting fluid under zero-gravity conditions. By definition of perfect wetting, the fluid-gas interface meets the (smooth) boundary of the cylindrical container with a zero contact angle; in other words one expects that any solution $u$ in the extremal case automatically satisfies the boundary condition of Neumann type
\begin{equation}\label{eq:Neumann}
\langle Tu,\nu\rangle = 1\qquad \text{on }\de\Omega\,.
\end{equation}
At the same time, one also experimentally observes that the solution $u$ is unique up to additive constants.
This is what Giusti showed to be a consequence of a more general equivalence result (see Theorem 2.1 in \cite{Giusti1978}) that he proved for the extremal case under the strong regularity assumption $\de\Omega\in C^{2}$. Later, Finn observed that the regularity requirements on $\de\Omega$ can be reduced to piece-wise Lipschitz (see \cite[Chapter 6]{Finn1986}) if one is interested in the existence of solutions to \eqref{PMC} in the $2$-dimensional case, and to ``$C^{1}$ up to a $\H^{n-1}$-negligible set'' if uniqueness up to vertical translations has to be shown in the extremal case. However the question about the validity of Giusti's result under weaker assumptions on $\de\Omega$ is still not completely answered.

In this paper we prove Giusti's characterization of existence and uniqueness of solutions to \eqref{PMC} under very mild regularity hypotheses on $\Omega$, see Theorems \ref{thm:subextremalexistence}, \ref{thm:extremal}, and \ref{thm:ACE}. In particular, our results are valid for domains with inner cusps or with some porosity (see Example \ref{ex:porosita}), which of course fall outside of the Lipschitz class. 

Specifically, we assume that $\Omega\subset \R^{n}$ is an open, bounded set with finite perimeter, satisfying the following properties. First, we require that $\Omega$ coincides with its measure-theoretic interior (roughly speaking, we do not allow $\Omega$ to have ``measure-zero holes''). Then we assume the existence of $k=k(\Omega)>0$ such that
\begin{equation}\label{eq:key}
\min\{ P(E; \Omega^{\mathsf{c}}),P(\Omega \setminus E; \Omega^{\mathsf{c}}) \} \leq k\, P(E; \Omega)
\end{equation}
for all $E\subset \Omega$. Finally, we require that 
\begin{equation}\label{eq:weakmildreg}
P(\Omega) = \H^{n-1}(\partial \Omega).
\end{equation}
Whenever an open set $\Omega$ satisfies \eqref{eq:key} and \eqref{eq:weakmildreg}, we say that $\Omega$ is \textit{weakly regular}. 
We stress that weak regularity can be regarded as a minimal assumption in the following sense. On one hand, if one assumes \eqref{eq:weakmildreg}, then \eqref{eq:key} is equivalent to the existence of a continuous and surjective trace operator from $BV(\Omega)$ to $L^{1}(\de\Omega)$, by well-known results about traces of functions in $BV(\Omega)$ (see Theorem \ref{thm:Mazya2011}). On the other hand, by Federer's Structure Theorem (see Theorem \ref{thm:Fed}), \eqref{eq:weakmildreg} amounts to requiring that the set of points of $\de\Omega$ that are of density $0$ or $1$ for $\Omega$ is $\H^{n-1}$-negligible, which can be considered as a very mild regularity assumption on $\de\Omega$. Moreover, in the extremal case one can show that \eqref{eq:key} is automatically satisfied by $\Omega$, thus only \eqref{eq:weakmildreg} needs to be assumed (see \cite{Sar17}). 

The proofs of the above-mentioned theorems require some facts and preliminary results of independent interest.  

One of the key tools that we shall systematically use in our proofs is Theorem \ref{thm:interiorapprox} about the interior approximation of an open set $\Omega$ with finite perimeter satisfying \eqref{eq:weakmildreg}, by means of sequences of smooth sets that converge to $\Omega$ in measure and in perimeter. This result has been proved by Schmidt \cite{Schmidt2014}, here we only add to the statement the useful observation that, being $\Omega$ connected, one can find a sequence of connected smooth sets with the above-mentioned property. Another, more technical tool is the recent characterization of $W^{1,1}_{0}(\Omega)$ as the space of functions in $W^{1,1}(\Omega)$ having zero trace at $\de\Omega$, due to Swanson \cite{swanson2007} (see Theorem \ref{thm:W110}).

In Section \ref{sec:GaussGreen} we introduce some notions and prove some results, that will be needed in the following sections. Under the assumptions \eqref{eq:key} and \eqref{eq:weakmildreg}, we prove Theorem \ref{thm:GaussGreen} which states a generalized Gauss-Green formula valid for bounded continuous vector fields with bounded divergence and for $BV$ functions on $\Omega$. 
We recall that very general forms of the Gauss-Green Theorem have been already obtained by several authors, see for instance \cite{DeGiorgi61b, Federer45, Federer58}, \cite{BuragoMazja69, Volpert67a}, \cite{Anzellotti83, Ziemer83}, and \cite{Pfeffer05a, DePauwPfeffer04a}. We recall in particular the extensions of the divergence theorem for bounded, divergence-measure vector fields on sets with finite perimeter \cite{ChenFrid99a, ChenFrid03, ChenTorres05, ChenTorresZiemer09}. These last results rely on a notion of \textit{weak normal trace} of a bounded, divergence-measure vector field $\xi$ on the reduced boundary of $E$, where $E\subset\subset \Omega$ is a set of finite perimeter and $\Omega$ is the domain of the vector field, see \cite{AmbrosioCrippaManiglia2005, ComiPayne2018, CrastaDeCicco2017}. This notion of trace already appears in \cite{Anzellotti83}, in the special case of $E$ being an open bounded set with Lipschitz boundary. 
A crucial tool used in \cite{ChenTorresZiemer09} (see also \cite{ComiTorres2017}) is the approximation of $E$ by smooth sets which are ``mostly'' contained in the measure-theoretic interior of $E$ with respect to the measure $\mu = \div \xi$. Actually, this is the main reason why $E$ needs to be compactly contained in the domain of the vector field $\xi$. On the other hand, if such a domain $\Omega$ has finite perimeter and $P(\Omega) = \H^{n-1}(\de\Omega)$ then one can consider the vector field $\hat\xi$ defined as $\hat\xi = \xi$ on $\Omega$ and $\hat\xi = 0$ on $\R^{n}\setminus \Omega$, so that by relying on Theorem \ref{thm:interiorapprox} it is possible to show that $\div \hat\xi$ is a finite measure on $\R^{n}$. Then by applying \cite[Theorem 25.1]{ChenTorresZiemer09} one might show the validity of the divergence theorem for the field $\xi$ on $E=\Omega$, which in turn leads to the generalized Gauss-Green formula
\begin{equation}\label{GGctz}
\int_{\Omega} \phi\, \div \xi + \int_{\Omega}\nabla\phi \cdot \xi = \int_{\de\Omega} \phi\, [\xi\cdot \nu]\, d\H^{n-1}\,,
\end{equation}
where $\nu$ is the exterior weak normal to $\de^{*}\Omega$, $[\xi\cdot\nu]$ denotes the weak normal trace of $\xi$ on $\de^{*}\Omega$, and $\phi\in C^{\infty}_{c}(\R^{n})$. However, also in view of the results of Section \ref{section:extremality}, in Section \ref{subs:wnt} we give a very direct proof of \eqref{GGctz} when $\Omega\subset\R^{n}$ is an open bounded set satisfying \eqref{eq:key} and \eqref{eq:weakmildreg}. This will be accomplished by adapting the construction proposed by Anzellotti in \cite{Anzellotti83} (see also \cite{Beck-Schmidt2015,Scheven-Schmidt2016}). More precisely, we will show that \eqref{GGctz} holds for every bounded continuous vector field $\xi$ with divergence in $L^{\infty}(\Omega)$ and for any $\phi\in BV(\Omega)$. We remark that the extra assumptions on $\xi$ that we are requiring reflect the properties of the vector field $Tu$ when $u$ is a solution of \eqref{PMC} on $\Omega$. We also stress that all bounded and connected Lipschitz domains, as well as some domains with inner cusps or with some controlled porosity (see for instance Example \ref{ex:porosita}), are weakly regular and therefore \eqref{GGctz} holds on them. 

Finally, in the proof of Theorem \ref{thm:ACE} we shall use a so-called \textit{weak Young's law} for $(\Lambda,r_{0})$-minimizers of the perimeter, Theorem \ref{thm:lambdamin}, that was originally shown in \cite[Proposition 2.5]{LeoPra2015} in the special case of Cheeger sets.

Some final observations about the stability of the solution to \eqref{PMC} in the extremal case are made. On one hand it is well-known in capillarity theory that even small and smooth deformations of $\Omega$ typically produce discontinuous changes in the solution of the capillary problem in $\Omega$, and even the existence of such a solution in the non-parametric setting may instantaneously drop (see \cite{Finn1986}). 
On the other hand, in Proposition \ref{prop:stability} we give an answer to the question whether or not it is possible to obtain some stability result for the solution $u=u_{\Omega}$ of \eqref{PMC} when the pair $(\Omega, H)$ is extremal. Then, by coupling Proposition \ref{prop:stability} with the construction described in Example \ref{ex:porosita}, a sequence of non-smooth perturbations of a $2$-dimensional disk can be constructed, in such a way that the corresponding sequence of solutions to the capillary problem for perfectly wetting fluids in zero gravity converge (up to suitable translations, and in the sense of $L^{1}_{loc}$-convergence of the epigraphs) to the solution of the problem in the disk.

\section{Preliminaries}
\label{sec:relative inequalities}

We first introduce some basic notations. We fix $n\ge 2$ and denote by $\R^{n}$ the Euclidean $n$-space. Let $E\subset \R^{n}$, then we denote by $\chi_{E}$ the characteristic function of $E$. For any $x\in \R^{n}$ and $r>0$ we denote by $B_{r}(x)$ the Euclidean open ball of center $x$ and radius $r$. Given two sets $E,F$, we denote by $E\difsim F = (E\setminus F)\cup(F\setminus E)$ their symmetric difference. In order to define rescalings of sets, we conveniently introduce the notation $E_{x,r} = r^{-1}(E-x)$, where $E\subset \R^{n}$, $x\in\R^{n}$, and $r>0$. Let $E\subset \Omega\subset \R^{n}$ with $\Omega$ open; we write $E\subset\subset \Omega$ whenever the topological closure of $E$, $\overline E$, is a compact subset of $\Omega$. Given a Borel set $E$ we denote by $|E|$ its $n$-dimensional Lebesgue measure. Whenever a measurable function, or vector field, $f$ is defined on $\R^{n}$, we set $\|f\|_{\infty}$ for the $L^{\infty}$-norm of $f$ on $\R^{n}$.

\begin{defin}[Perimeter]\label{def:per}
Let $E$ be a Borel set in $\R^n$. We define the perimeter of $E$ in an open set $\Omega\subset \R^{n}$ as
\[
P(E; \Omega):=\sup \left\{ \int_\Omega \chi_E(x) \div g(x)\, dx\,: g\in C^1_c(\Omega;\, \R^n)\,, \|g\|_{\infty} \leq 1\right\}\,.
\]
We set $P(E) = P(E;\R^{n})$. If $P(E;\Omega)<\infty$ we say that $E$ is a set of finite perimeter in $\Omega$. In this case (see \cite{Maggi}) one has that the perimeter of $E$ coincides with the total variation $|D\chi_{E}|$ of the vector--valued Radon measure $D\chi_{E}$ (the distributional gradient of $\chi_{E}$), which is defined for all Borel subsets of $\Omega$ thanks to Riesz Theorem.
\end{defin}

\begin{defin}[Points of density $\alpha$]
Let $E$ be a Borel set in $\R^n$, $x\in \R^n$. If the limit
\[
\theta(E)(x) := \lim_{r\to0^+} \frac{|E\cap B_r(x)|}{\omega_nr^n}
\]
exists, it is called the density of $E$ at $x$. We define the set of points of density $\alpha\in [0,1]$ of $E$ as
\[
E^{(\alpha)} := \left\{ x\in \R^n\,:\, \theta(E)(x) = \alpha\right\}\,.
\]
We also define the essential boundary $\de^{e}E := \R^{n}\setminus (E^{(0)}\cup E^{(1)})$.
\end{defin}

\begin{defin}[Approximate limit]
Let $f$ be a measurable function or vector field defined on $\Omega$. Given $z\in \overline{\Omega}$ we write
\[
\aplim_{x\to z} f(x) = w
\]
if for every $\alpha>0$ the set $\{x\in \Omega:\ |f(x)-w|\ge \alpha\}$ 
has density $0$ at $z$.
\end{defin}

\begin{thm}[De Giorgi Structure Theorem]\label{thm:DeGiorgi}
Let $E$ be a set of finite perimeter and let $\de^*E$ be the reduced boundary of $E$ defined as
\[
\de^* E:=\left \{x\in \de^{e}E\,:\, \lim_{r\to 0^+} \frac{D\chi_E(B_r(x))}{| D\chi_E|(B_r(x))} = -\nu_E(x) \in \mathbb{S}^{n-1} \right\}\,.
\]
Then,
\begin{itemize}
\item[(i)] $\de^{*}E$ is countably $\H^{n-1}$-rectifiable in the sense of Federer~\cite{FedererBOOK};

\item[(ii)] for all $x\in \de^{*}E$, $\chi_{E_{x,r}} \to \chi_{H_{\nu_E(x)}}$ in $L^{1}_{loc}(\R^{n})$ as $r\to 0^{+}$, where $H_{\nu_{E}(x)}$ denotes the half-space through $0$ whose exterior normal is $\nu_{E}(x)$;

\item[(iii)] for any Borel set $A$, $P(E;A) = \H^{n-1}(A\cap \de^{*}E)$, thus in particular $P(E)=\H^{n-1}(\partial^* E)$;

\item[(iv)] $\int_{E}\div g = \int_{\de^{*}E} g\cdot \nu_{E}\, d\H^{n-1}$ for any $g\in C^{1}_{c}(\R^{n};\R^{n})$.
\end{itemize}
\end{thm}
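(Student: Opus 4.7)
The statement is the classical De Giorgi Structure Theorem, whose proof I would not attempt ab initio but rather reconstruct along the well-known lines (as in, e.g., Maggi's or Ambrosio-Fusco-Pallara's books). The heart of the whole theorem is (ii): once the blow-ups at every reduced-boundary point are known to converge to a half-space, the remaining items (i), (iii), (iv) follow through measure-theoretic machinery.

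\textbf{Step 1 (uniform density estimates).} I would first prove that for every $x\in\partial^{*}E$ there exist $r_{0}=r_{0}(x)>0$ and constants $0<c<C$ such that, for all $r\in(0,r_{0})$,
\[
c\,r^{n-1}\le |D\chi_{E}|(B_{r}(x))\le C\,r^{n-1},\qquad c\,r^{n}\le |E\cap B_{r}(x)|\le (1-c)\,r^{n}\omega_{n}.
\]
These are obtained by combining the isoperimetric inequality with the defining property that $|D\chi_{E}(B_{r}(x))/|D\chi_{E}|(B_{r}(x))|\to 1$; roughly, if the measure $D\chi_{E}$ concentrates in direction near $-\nu_{E}(x)$ then the total variation cannot be much smaller than that concentrated mass, while on the other hand classical isoperimetric/relative isoperimetric inequalities on balls bound it from above.

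\textbf{Step 2 (blow-up to a half-space).} Using the rescaled sets $E_{x,r}$ and the scaling law $|D\chi_{E_{x,r}}|(B_{\rho})=r^{1-n}|D\chi_{E}|(B_{r\rho}(x))$, the uniform bounds in Step 1 give equi-boundedness in $BV_{loc}$. Hence along any sequence $r_{k}\to 0^{+}$ one extracts an $L^{1}_{loc}$-limit $F$ with $\chi_{E_{x,r_{k}}}\to\chi_{F}$. Passing to the limit in the defining ratio of the reduced boundary and using lower semicontinuity, one shows that for $\rho>0$
\[
D\chi_{F}(B_{\rho})=-\nu_{E}(x)\,|D\chi_{F}|(B_{\rho}),
\]
whence $D\chi_{F}=-\nu_{E}(x)\,|D\chi_{F}|$ as measures. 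This forces $F$ to be the half-space $H_{\nu_{E}(x)}$; the argument here is the genuinely delicate one, since one must rule out other minimizers of the deficit, and it is the step I expect to be the main obstacle.

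\textbf{Step 3 (rectifiability and measure identification).} From (ii), every $x\in\partial^{*}E$ admits an approximate tangent $(n-1)$-plane $\nu_{E}(x)^{\perp}$ to $\partial^{*}E$, and the density computation together with the blow-up yields
\[
\lim_{r\to 0^{+}}\frac{|D\chi_{E}|(B_{r}(x))}{\omega_{n-1}r^{n-1}}=1.
\]
Federer's rectifiability criterion (existence of approximate tangent planes on a set of $\sigma$-finite $\H^{n-1}$-measure) then gives (i), and differentiation of Radon measures together with the previous density identity gives $|D\chi_{E}|=\H^{n-1}\llcorner\partial^{*}E$, which is (iii). Finally, (iv) is immediate from the very definition of $D\chi_{E}$ as the distributional gradient of $\chi_{E}$, once (iii) identifies the measure and its polar decomposition provides $\nu_{E}$.
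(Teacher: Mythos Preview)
The paper does not prove this statement: Theorem~\ref{thm:DeGiorgi} appears in the Preliminaries as a classical result, stated without proof and with a reference to Federer. There is therefore nothing in the paper to compare your proposal against; your outline is a faithful sketch of the standard proof (as in Maggi or Ambrosio--Fusco--Pallara), and that is exactly what the authors are relying on when they cite the result.
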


\begin{thm}[Federer's Structure Theorem]\label{thm:Fed}
Let $E$ be a set of finite perimeter. Then, $\de^* E \subset E^{(1/2)} \subset \de^e E$ and one has
\[
\H^{n-1}\left(\de^e E \setminus \de^* E\right)=0 \,.
\]
\end{thm}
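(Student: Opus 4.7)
The statement consists of two inclusions and a measure-theoretic equality; I would handle them in order of difficulty. For $\partial^* E \subset E^{(1/2)}$, apply Theorem~\ref{thm:DeGiorgi}(ii): at every $x\in\partial^* E$ the rescaled characteristic functions $\chi_{E_{x,r}}$ converge in $L^1_{loc}(\R^n)$ to $\chi_{H_{\nu_E(x)}}$. Testing this convergence on $B_1$ and rescaling yields $|E\cap B_r(x)|/(\omega_n r^n)\to 1/2$, so $x\in E^{(1/2)}$. The inclusion $E^{(1/2)}\subset\partial^e E$ is immediate from the definitions, since a point of density $1/2$ is neither of density $0$ nor of density $1$.

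For the non-trivial claim $\H^{n-1}(\partial^e E\setminus\partial^* E)=0$, my plan is to show that at every $x\in\partial^e E$ the perimeter measure $|D\chi_E|$ has strictly positive upper $(n-1)$-density, and then compare $\H^{n-1}$ against $|D\chi_E|$ using that, by Theorem~\ref{thm:DeGiorgi}(iii), $|D\chi_E|$ is concentrated on $\partial^* E$. For the density bound, fix $x\in\partial^e E$; since $x\notin E^{(0)}\cup E^{(1)}$, the upper density $\theta^*(E)(x)$ is strictly positive (otherwise the density would exist and equal $0$) and the lower density $\theta_*(E)(x)$ is strictly less than $1$ (otherwise it would equal $1$). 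Set $\tau(x)=\tfrac12\min\bigl(\theta^*(E)(x),\,1-\theta_*(E)(x)\bigr)>0$. Exploiting the continuity of $r\mapsto |E\cap B_r(x)|/(\omega_n r^n)$, a short contradiction argument (if the function stayed outside $[\tau,1-\tau]$ for all small $r$, by continuity it would stay on only one side of the gap, contradicting the definitions of $\theta^*$ or $\theta_*$) produces a sequence $r_k\to 0^+$ with
$$\min\bigl(|E\cap B_{r_k}(x)|,\,|B_{r_k}(x)\setminus E|\bigr)\ge \tau(x)\,\omega_n r_k^n.$$
The relative isoperimetric inequality in $B_{r_k}(x)$ then yields $P(E;B_{r_k}(x))\ge c(n)\,\tau(x)^{(n-1)/n} r_k^{n-1}$, so $\limsup_{r\to 0^+}|D\chi_E|(B_r(x))/r^{n-1}>0$.

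To conclude, stratify $\partial^e E=\bigcup_{j\in\N} N_j$ with $N_j=\{x\in\partial^e E:\tau(x)>1/j\}$. On each $N_j$ the upper $(n-1)$-density $\Theta^{*,n-1}(|D\chi_E|,\cdot)$ is bounded below by a constant $c_j>0$, so the classical density comparison lemma for Radon measures yields $\H^{n-1}(N_j\setminus\partial^* E)\le c_j^{-1}|D\chi_E|(N_j\setminus\partial^* E)=0$, the last equality coming from Theorem~\ref{thm:DeGiorgi}(iii). A countable union over $j$ closes the argument.

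The main technical subtlety I expect is the extraction of the scales $r_k$: the separate information $\theta^*(E)(x)>0$ and $\theta_*(E)(x)<1$ must be combined, through continuity of the volume function, to produce a single radius at which both $E$ and its complement occupy a definite fraction of the same ball. Once that is in hand, everything else reduces to direct appeals to De Giorgi's Structure Theorem and the standard comparison between Hausdorff measure and a Radon measure with positive upper density.
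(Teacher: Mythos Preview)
The paper does not supply its own proof of this statement: Theorem~\ref{thm:Fed} is quoted as the classical Federer Structure Theorem and used as a black box in the preliminaries, so there is nothing to compare your argument against on the paper's side.

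Your plan is correct and is essentially the standard textbook proof (cf.\ Ambrosio--Fusco--Pallara, Maggi, or Evans--Gariepy). The two inclusions are handled exactly as they should be. For the measure equality, your key step---producing, at each $x\in\partial^e E$, a sequence of radii along which both $E$ and its complement occupy a definite fraction of the ball---is sound: the function $r\mapsto |E\cap B_r(x)|/(\omega_n r^n)$ is continuous on $(0,\infty)$, and your intermediate-value contradiction correctly forces it to visit the interval $[\tau,1-\tau]$ along some sequence $r_k\downarrow 0$. The relative isoperimetric inequality then gives a uniform positive lower bound on $\limsup_{r\to 0}|D\chi_E|(B_r(x))/r^{n-1}$ over each stratum $N_j$, and the density-comparison lemma (together with outer regularity of the Radon measure $|D\chi_E|$) yields $\H^{n-1}(N_j\setminus\partial^*E)\le C\,c_j^{-1}|D\chi_E|(N_j\setminus\partial^*E)=0$, the last equality by Theorem~\ref{thm:DeGiorgi}(iii). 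A countable union finishes it. One small cosmetic point: when you invoke the density-comparison lemma, make explicit that it delivers $\mu(A)$ (not merely $\mu(\R^n)$) on the right-hand side, via outer regularity and the fact that the covering balls can be taken with arbitrarily small radii.
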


In what follows, $\Omega$ will always denote a \textit{domain} of $\R^{n}$, i.e., an open connected set coinciding with its measure-theoretic interior, that is, we assume that any point $x\in \R^{n}$, such that there exists a radius $r>0$ with the property $|B_{r}(x)\setminus \Omega|=0$, is necessarily contained in $\Omega$.

The next result combines \cite[Theorem 9.6.4]{Mazya2011} and \cite[Theorem 10 (a)]{AnzGia1978}.
\begin{thm}\label{thm:Mazya2011}
Let $\Omega\subset \R^{n}$ be a bounded domain with $P(\Omega) = \H^{n-1}(\de\Omega) <+\infty$. Then the following are equivalent:
\begin{itemize}
\item[(i)] there exists $k = k(\Omega)$ such that for all $E\subset \Omega$
\[
\min\{ P(E; \Omega^{\mathsf{c}}),P(\Omega \setminus E; \Omega^{\mathsf{c}}) \} \leq k P(E; \Omega);
\]

\item[(ii)] there exists a continuous trace operator from $BV(\Omega)$ to $L^{1}(\de\Omega)$ with the following property: for any $\phi\in L^{1}(\de\Omega)$ there exists $\Psi\in W^{1,1}(\R^{n})$ such that $\phi$ is the trace of $\Psi$ on $\de\Omega$.
\end{itemize}
\end{thm}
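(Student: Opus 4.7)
The statement packages Maz'ya's characterisation of traces for $BV(\Omega)$ together with the $L^1(\de\Omega)\to W^{1,1}(\R^n)$ extension theorem of Anzellotti--Giaquinta. My plan is therefore to handle three essentially independent pieces: the continuous trace inequality contained in $(i)\Rightarrow(ii)$, the converse $(ii)\Rightarrow(i)$, and the extension clause embedded in $(ii)$.

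For $(i)\Rightarrow(ii)$ I would work through the $BV$ coarea formula
\[
|Du|(\Omega)=\int_{\R} P(\{u>t\};\Omega)\,dt,
\]
which reduces the construction of the trace of an arbitrary $u\in BV(\Omega)$ to defining it on characteristic functions. Given $E\subset\Omega$ with $P(E;\Omega)<+\infty$, I would extend $\chi_{E}$ by zero to $\R^{n}$ and, using Federer's Theorem~\ref{thm:Fed} together with the hypothesis $P(\Omega)=\H^{n-1}(\de\Omega)$, identify the $\H^{n-1}$-measure of the jump of the extension across $\de\Omega$ with $P(E;\Omega^{\mathsf c})$; the trace $T\chi_{E}$ is then the characteristic function of that jump set. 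Applying inequality $(i)$ level-set by level-set and integrating in $t$ provides a continuous operator $T\colon BV(\Omega)\to L^{1}(\de\Omega)$ with $\|Tu\|_{L^{1}(\de\Omega)}\le k\,|Du|(\Omega)$.

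The converse $(ii)\Rightarrow(i)$ reduces to a direct test on $u=\chi_{E}$: by the construction just sketched, $T\chi_{E}$ is $\H^{n-1}$-a.e.\ $\{0,1\}$-valued on $\de\Omega$, and its $L^{1}$-norm equals either $P(E;\Omega^{\mathsf c})$ or, after swapping $E$ with $\Omega\setminus E$, $P(\Omega\setminus E;\Omega^{\mathsf c})$. Taking the smaller of the two and invoking the operator norm of $T$ yields exactly $(i)$ with $k=\|T\|$.

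The main obstacle is the extension clause, which is where weak regularity really has to do work. Here I would follow the Anzellotti--Giaquinta strategy: using Schmidt's theorem (Theorem~\ref{thm:interiorapprox}) I would pick smooth sets $\Omega_{j}\subset\subset \Omega$ with $|\Omega\setminus\Omega_{j}|\to 0$ and $P(\Omega_{j})\to P(\Omega)$, and regard $\de\Omega_{j}$ as a surrogate for a Lipschitz collar. Given $\phi\in L^{1}(\de\Omega)$, I would pull it back to each $\de\Omega_{j}$ via a nearest-point projection, mollify tangentially at a scale comparable to $\dist(\de\Omega_{j},\de\Omega)$, and glue the resulting profiles inward through a cut-off in the normal direction. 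The delicate step is bounding $\|\Psi\|_{W^{1,1}(\R^{n})}$ by $\|\phi\|_{L^{1}(\de\Omega)}$ without a genuine Lipschitz chart: this is precisely where $(i)$ re-enters, providing the two-sided perimeter comparison needed to control the normal derivative of $\Psi$ on the shrinking shell between $\de\Omega_{j}$ and $\de\Omega$. A symmetric extension by zero outside $\Omega$ then produces the desired $\Psi\in W^{1,1}(\R^{n})$ with trace $\phi$ on $\de\Omega$.
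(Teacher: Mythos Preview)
The paper does not supply its own proof of this theorem: it is stated as a direct combination of \cite[Theorem~9.6.4]{Mazya2011} and \cite[Theorem~10(a)]{AnzGia1978}, with no argument given. So there is nothing in the paper to compare your proposal against line by line; what one can do is assess whether your sketch reproduces what those references actually contain.

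Your treatment of the equivalence between the isoperimetric-type inequality $(i)$ and the existence of a continuous trace operator is essentially Maz'ya's argument: reduce via coarea to characteristic functions, identify the trace of $\chi_{E}$ with the portion of $\de^{*}E$ sitting on $\de\Omega$, and read off the $L^{1}(\de\Omega)$ bound from $(i)$; for the converse test the trace inequality on $\chi_{E}$ and $\chi_{\Omega\setminus E}$. That part is fine in outline, though the identification $\|T\chi_{E}\|_{L^{1}(\de\Omega)}=P(E;\Omega^{\mathsf c})$ needs a line of justification via Federer's theorem and the standing hypothesis $P(\Omega)=\H^{n-1}(\de\Omega)$.

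The genuine gap is in your extension clause. The Anzellotti--Giaquinta construction does \emph{not} proceed by pulling $\phi$ back to the smooth inner boundaries $\de\Omega_{j}$ via nearest-point projection and then gluing normally. That projection is in general neither single-valued nor Lipschitz for boundaries satisfying only \eqref{eq:weakmildreg}, so you have no control on how $\phi$ transported to $\de\Omega_{j}$ behaves, let alone on the $W^{1,1}$ norm of the glued extension; invoking $(i)$ here does not repair this, because $(i)$ is a perimeter comparison for subsets of $\Omega$, not a metric statement about the geometry of the shell $\Omega\setminus\Omega_{j}$. Anzellotti--Giaquinta instead argue by duality and density: they show that under the trace inequality the map $u\mapsto u|_{\de\Omega}$ restricted to $W^{1,1}(\R^{n})$ has dense range in $L^{1}(\de\Omega)$ and is bounded below (in the appropriate quotient), and then obtain surjectivity onto $L^{1}(\de\Omega)$ by a closed-range/open-mapping argument. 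If you want a constructive proof you would need to replace the nearest-point step by something that survives without Lipschitz charts; as written, that step does not go through.
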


Another useful result is the characterization of $W^{1,1}_{0}(\Omega)$ as the space of functions in $W^{1,1}(\Omega)$ having zero trace at $\de\Omega$.
\begin{thm}[{\cite[Theorem 5.2]{swanson2007}}]\label{thm:W110}
Let $\Omega\subset \R^{n}$ be an open set and let $u\in W^{1,1}(\Omega)$. Then, $u\in W^{1,1}_{0}(\Omega)$ if and only if
\[
\lim_{r\to 0} \frac{1}{r^{n}}\int_{B_{r}(x)\cap \Omega}|u(y)|\, dy = 0
\]
for $\H^{n-1}$-almost all $x\in \de\Omega$.
\end{thm}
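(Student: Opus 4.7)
My plan is to reduce the statement to well-understood properties of the zero extension of $u$ to $\R^{n}$. Define $\tilde u$ by $\tilde u = u$ on $\Omega$ and $\tilde u \equiv 0$ on $\R^{n}\setminus\Omega$. Since $\tilde u$ vanishes outside $\Omega$, the quantity on the left-hand side of the stated limit coincides, up to a dimensional constant, with the spherical average $|B_{r}(x)|^{-1}\int_{B_{r}(x)}|\tilde u|\,dy$. The theorem thus reduces to identifying when this average vanishes in an $\H^{n-1}$-a.e.\ sense on $\de\Omega$.

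For the necessity direction, I would pick $u_{k}\in C^{\infty}_{c}(\Omega)$ with $u_{k}\to u$ in $W^{1,1}(\Omega)$ and pass to the zero extensions $\tilde u_{k}\in C^{\infty}_{c}(\R^{n})$, which converge to $\tilde u$ in $W^{1,1}(\R^{n})$. Along a subsequence, the precise representatives converge $W^{1,1}$-capacity quasi-everywhere, hence $\H^{n-1}$-almost everywhere (since sets of zero $W^{1,1}$-capacity are $\H^{n-1}$-negligible). Each $\tilde u_{k}$ is zero on an open neighbourhood of $\de\Omega$, so its precise representative is identically zero there, forcing the precise representative $\tilde u^{*}$ of $\tilde u$ to vanish $\H^{n-1}$-a.e.\ on $\de\Omega$. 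Applying the Lebesgue-point theorem for $W^{1,1}(\R^{n})$ (Federer--Ziemer) at such points then yields $|B_{r}(x)|^{-1}\int_{B_{r}(x)}|\tilde u|\,dy\to 0$, which is exactly the required limit.

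For the sufficiency direction, I would first show that $\tilde u\in W^{1,1}(\R^{n})$ with $\grad\tilde u=\grad u$ on $\Omega$ and zero otherwise. Integrating by parts on $\Omega$ against $\phi\in C^{\infty}_{c}(\R^{n})$, the only potentially singular contribution to $D\tilde u$ is concentrated on $\de\Omega$, and its $\H^{n-1}$-density at a boundary point coincides, via the coarea formula applied to the super-level sets of $|u|$, with the $L^{1}$-boundary trace of $u$; the hypothesis says precisely that this trace is zero $\H^{n-1}$-a.e., hence no singular part appears. With $\tilde u\in W^{1,1}(\R^{n})$ in hand, I would approximate by $C^{\infty}_{c}(\Omega)$ functions using cut-offs $\eta_{k}$ that vanish on a $1/k$-neighbourhood of $\de\Omega$, equal $1$ outside a $2/k$-neighbourhood, with $|\grad\eta_{k}|\le Ck$. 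The Leibniz rule reduces the $W^{1,1}$-error to a term bounded by $Ck\int_{\{0<\dist(y,\R^{n}\setminus\Omega)<2/k\}}|u|\,dy$, which the coarea formula rewrites as a weighted integral of $L^{1}$-trace densities that, by the hypothesis, is infinitesimal. A standard mollification of $\tilde u\,\eta_{k}$ at scale much smaller than $1/k$ provides the required $C^{\infty}_{c}(\Omega)$ approximants.

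The main obstacle I expect is the sufficiency direction, specifically the passage from the pointwise $\H^{n-1}$-a.e.\ hypothesis on $\de\Omega$ to a uniform integral control on thin tubular neighbourhoods of $\de\Omega$. Without any regularity assumption on the boundary, those tubes cannot be parametrised in a straightforward manner, and the coarea-based machinery together with fine $W^{1,1}$-capacity properties are needed; this is the technical heart of Swanson's original argument.
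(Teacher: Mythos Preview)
The paper does not prove this statement; it is quoted as a black box from \cite{swanson2007}, so there is no in-paper argument to compare your sketch against.

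On the merits of your sketch: necessity is fine. Sufficiency has two genuine gaps.

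Your step (a), that $\tilde u\in W^{1,1}(\R^{n})$, is not justified for an \emph{arbitrary} open $\Omega$. You implicitly assume $D\tilde u$ is a Radon measure whose singular part is $\H^{n-1}$-absolutely continuous on $\de\Omega$ with density equal to a ``trace'' of $u$; but with no regularity on $\Omega$ there is no trace operator, $\de\Omega$ need not have $\sigma$-finite $\H^{n-1}$-measure, and $\tilde u$ need not lie in $BV_{loc}(\R^{n})$ a priori. The coarea heuristic you allude to does not close this.

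Your step (b) has the gap you yourself flag, and it is not a minor one. Note first that $\tilde u\in W^{1,1}(\R^{n})$ alone does \emph{not} imply $u\in W^{1,1}_{0}(\Omega)$ for general $\Omega$ (take $\Omega=B_{1}\setminus L$ with $L\subset\R^{2}$ a segment and $u\in C^{\infty}_{c}(B_{1})$ nonzero on $L$: then $\tilde u=u\in W^{1,1}(\R^{2})$ but $u\notin W^{1,1}_{0}(\Omega)$). So the cut-off argument must re-use the hypothesis via the tube estimate $k\int_{\{0<\dist(\cdot,\Omega^{\mathsf c})<2/k\}}|u|\to 0$. For irregular $\Omega$ the tubes are uncontrolled (possibly infinite $\H^{n-1}(\de\Omega)$, no bound on tube volume), and a pointwise $\H^{n-1}$-a.e.\ hypothesis does not convert to an integral tube bound by any elementary covering. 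This is precisely where Swanson's argument brings in fine properties of $W^{1,1}$ functions and $1$-capacity in an essential way, and your outline does not reproduce that machinery.
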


The following approximation theorem is essentially due to Schmidt \cite{Schmidt2014} and  will play a crucial role in the paper.
\begin{thm}[Interior smooth approximation]\label{thm:interiorapprox}
Suppose that $\Omega$ is a bounded open set in $\mathbb{R}^n$ such that $P(\Omega) = \H^{n-1}(\de\Omega) < +\infty$. Then, for every $\delta >0$ there exist an open set $\Omega_\delta$ with smooth boundary in $\mathbb{R}^n$ such that
\begin{equation}\label{proprSchmidt}
\Omega_\delta \subset\subset \Omega, \quad \quad \Omega \setminus \Omega_\delta \subset (\mathcal{N}_\delta (\partial \Omega) \cap \mathcal{N}_\delta (\partial \Omega_\delta)),\quad  \quad |\Omega\setminus\Omega_{\delta}|<\delta,\quad  \quad P(\Omega_\delta) \leq P(\Omega) +\delta,
\end{equation}
where $\mathcal{N}_\delta (A)$ denotes the $\delta$-tubular neighborhood of $A\subset \mathbb{R}^n$. Moreover, $\Omega_{\delta}$ can be chosen connected as soon as $\Omega$ is connected.
\end{thm}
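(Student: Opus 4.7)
The plan is to split the statement into two parts: the existence of a smooth interior approximation satisfying \eqref{proprSchmidt} is exactly Schmidt's theorem from \cite{Schmidt2014}, which I would invoke verbatim. The only original content is the final observation that connectedness of $\Omega$ can be transferred to the approximating sets, so my proof would focus entirely on that refinement. The strategy is to start from Schmidt's (a priori disconnected) approximation, prune the small components, and then connect the remaining finitely many large components by thin smooth bridges inside $\Omega$, redoing a final smoothing at the end.

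In more detail, given $\delta>0$, I would first apply Schmidt with a much smaller parameter $\eta\ll\delta$ to obtain a smooth open set $\widetilde\Omega_\eta$ satisfying \eqref{proprSchmidt} with $\eta$ in place of $\delta$. Decomposing $\widetilde\Omega_\eta$ into its at most countably many smooth connected components $\{C_j\}_j$, one has $\sum_j P(C_j)=P(\widetilde\Omega_\eta)\le P(\Omega)+\eta$; combined with the Euclidean isoperimetric inequality $|C_j|^{(n-1)/n}\lesssim P(C_j)$, this forces the number of components with $|C_j|\ge\eta^{\alpha}$ to be finite, for any fixed $\alpha\in(0,n/(n-1))$, and the union of the discarded small components to have total measure $O(\eta^{\alpha/n})$ and contribute negligibly to the perimeter. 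I denote the union of the retained components by $\widehat\Omega_\eta=C_1\cup\dots\cup C_N$; it is still smooth, compactly contained in $\Omega$, and obeys a slight weakening of \eqref{proprSchmidt}.

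To merge $C_1,\dots,C_N$ into a connected smooth set, I would use that $\Omega$ is open and connected, hence path-connected: one can select $N-1$ smooth simple arcs $\gamma_1,\dots,\gamma_{N-1}\subset\Omega$ forming a spanning tree on the components, with $\bigcup_i\overline{\gamma_i}$ compact in $\Omega$. Fixing $\epsilon>0$ small enough that the closed $\epsilon$-tubular neighborhoods $T_\epsilon(\gamma_i)$ still lie in $\Omega$, the set $\widehat\Omega_\eta\cup\bigcup_iT_\epsilon(\gamma_i)$ is open, connected, and still compactly contained in $\Omega$. A final smoothing step—either a mollification followed by Sard's theorem to pick a regular superlevel set, or a direct rounding of the corners where the tubes meet the components—produces the sought $\Omega_\delta$ with smooth boundary, and by construction property 2 in \eqref{proprSchmidt} is preserved because nothing was added outside a tubular neighborhood of $\widehat\Omega_\eta$.

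The main obstacle is the perimeter bound $P(\Omega_\delta)\le P(\Omega)+\delta$, which must survive both the addition of the bridges and the final smoothing. For $n\ge 3$ each tube contributes $O(\epsilon^{n-2})$ to the perimeter, so the cost of connecting tends to $0$ with $\epsilon$; the genuinely delicate case is $n=2$, where a tube of thickness $\epsilon$ around a curve of length $L$ has perimeter bounded below by $2L$ independently of $\epsilon$. In that case I would replace long tubes by short bridges placed where two components come within distance $O(\eta)$ of each other (which must happen since $\Omega\setminus\widehat\Omega_\eta$ lies in $\mathcal N_\eta(\partial\Omega)\cup\mathcal N_{\eta^\alpha}(\text{small components})$ and $\Omega$ is connected), and exploit the fact that gluing along a bridge of width $\epsilon$ cancels pieces of $\partial C_i$ of comparable length, so the net perimeter increment is $O(\eta)$ per bridge. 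Choosing $\eta$, $\alpha$, and $\epsilon$ in a chained way makes all error terms smaller than $\delta$, yielding \eqref{proprSchmidt} for $\Omega_\delta$.
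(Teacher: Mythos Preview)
Your approach diverges from the paper's in the connectedness step, and the divergence introduces a real gap in dimension $n=2$.

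The paper does not add anything to Schmidt's approximation; it \emph{subtracts}. It first builds, using only the connectedness and openness of $\Omega$, a connected compact set $\widetilde K\subset\Omega$ with $|\Omega\setminus\widetilde K|<\delta$ (a finite union of balls covering a large compact $K$, together with arcs in $\Omega$ joining their centers). Then it applies Schmidt with a parameter $\tilde\delta\le\dist(\widetilde K,\partial\Omega)$: since $\Omega\setminus\Omega_{\tilde\delta}\subset\mathcal N_{\tilde\delta}(\partial\Omega)$, the set $\widetilde K$ is forced to sit inside $\Omega_{\tilde\delta}$, and one simply takes $\Omega_\delta$ to be the connected component of $\Omega_{\tilde\delta}$ containing $\widetilde K$. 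Passing to a single smooth component can only lower the perimeter, so the bound $P(\Omega_\delta)\le P(\Omega)+\delta$ is automatic. No bridges, no smoothing, no case distinction on $n$.

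Your route of adding tubes is fine for $n\ge 3$, but your $n=2$ repair does not work as stated. Take $\Omega$ a dumbbell: two unit disks joined by a neck of length $L$ and width $2\eta$. The neck lies in $\mathcal N_\eta(\partial\Omega)$, so Schmidt's approximation at scale $\eta$ can consist of two disjoint near-disks whose mutual distance is $\sim L$, not $O(\eta)$. Your premise ``two components come within distance $O(\eta)$ of each other'' is therefore false, and so is the consequent ``the net perimeter increment is $O(\eta)$ per bridge'': a bridge through the neck costs $\sim 2L$ in perimeter while cancelling only $O(\epsilon)$ of $\partial C_i$. (The construction can in fact be salvaged by a \emph{global} accounting---the $2L$ you pay for the tube is offset by the $2L$ already present in $P(\Omega)$ from the neck, which Schmidt's disconnected approximation did not use---but that is a different argument from the local one you wrote, and it is superseded anyway by the one-line ``take a component'' trick.)
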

\begin{proof}
The existence of $\Omega_{\delta}$ satisfying \eqref{proprSchmidt} is proved in \cite{Schmidt2014}. In order to show the last part of the statement, we fix a compact set $K\subset \Omega$ such that $|\Omega\setminus K|<\delta$, then setting $d = \min\{\dist(x,\de \Omega):\ x\in K\}$ we take a finite covering of $K$ by balls of radius $d/2$ and let $x_{1},\dots,x_{N}$ denote their centers. By connectedness, for any $h,k\in \{1,\dots,N\}$ there exists a path $\Gamma_{hk}\subset \Omega$ connecting $x_{h}$ to $x_{k}$, so that the set 
\[
\widetilde K = \bigcup_{h=1}^{N} B_{d/2}(x_{h}) \cup \bigcup_{h,k=1}^{N} \Gamma_{hk}
\]
is contained in $\Omega$, connected, compact, and such that $|\Omega\setminus \widetilde K|<\delta$. Let now $\tilde \delta =  \min(\min\{\dist(x,\de \Omega):\ x\in \widetilde K\},\delta)>0$, then by \eqref{proprSchmidt} with $\tilde \delta$ replacing $\delta$ we get an open set $\Omega_{\tilde \delta}$ which necessarily has a connected component $A$ containing $\widetilde K$, so that \eqref{proprSchmidt} and the last part of the statement are satisfied by setting $\Omega_{\delta} = A$.
\end{proof}

%

\section{Some technical tools}
\label{sec:GaussGreen}

We collect in this section some key notions and results that will be later needed. Our first aim is to prove the Gauss-Green Theorem \ref{thm:GaussGreen}, on which the main results of Section \ref{section:extremality} are based. For this we shall introduce the \textit{weak normal trace} of a vector field $\xi$ on $\de \Omega$, denoted as $[\xi\cdot \nu]$, as a suitable extension of the usual scalar product between the trace of $\xi$ and the normal to $\de\Omega$, whenever the former exists. It is indeed quite easy to prove that whenever the approximate limit of the vector field $\xi(x)$ exists as $x\to z\in \de^* \Omega$, then $[\xi\cdot \nu](z)$ equals the scalar product between that limit and the outer normal to $\de^{*}\Omega$ at $z$, see Proposition \ref{prop:classiclimit}. 

Our second tool is represented by a \textit{weak Young's law} for perimeter quasiminimizers, Theorem \ref{thm:lambdamin}, that will be needed in Section \ref{section:extremality} for the proof of the implication $(U) \implies (E)$ in Theorem \ref{thm:ACE}. A slightly less general form of this lemma has been proved in \cite{LeoPra2015}, in the context of Cheeger sets. Roughly speaking, it says that the inner boundary of any $(\Lambda,r_{0})$-minimizer of the perimeter in a domain $\overline\Omega$ must meet the reduced boundary of $\Omega$ in a tangential way.

\subsection{The Weak Normal Trace}\label{subs:wnt}

Let $\Omega\subset \R^{n}$ be open, bounded, and weakly regular, i.e., satisfying \eqref{eq:key} and \eqref{eq:weakmildreg}. We denote by $X(\Omega)$ the collection of vector fields $\xi\in L^{\infty}(\Omega;\R^{n})\cap C^{0}(\Omega;\R^{n})$ such that $\div \xi \in L^{\infty}(\Omega)$. Following Anzellotti \cite{Anzellotti83}, for every $u\in BV(\Omega)$ we define the pairing
\begin{equation}\label{pairing1}
\langle \xi,u\rangle_{\de\Omega} = \int_{\Omega} u\, \div \xi + \int_{\Omega}\xi\cdot Du\,.
\end{equation}
The map $\langle\cdot,\cdot\rangle_{\de\Omega}:X(\Omega)\times BV(\Omega)\to \R$ is bilinear. If $u,v\in W^{1,1}(\Omega)$ have the same trace on $\de\Omega$ then by Theorem \ref{thm:W110} there exists a sequence $\{g_{j}\}$ of functions in $C^{\infty}_{c}(\Omega)$ such that $g_{j}\to u-v$ weakly in $BV(\Omega)$, so that we have
\begin{align*}
\langle \xi,u - v\rangle_{\de\Omega} &= \int_{\Omega}(u-v)\div \xi + \int_{\Omega}\xi\cdot D(u-v) \\ 
&= \lim_{j} \int_{\Omega}g_{j}\div \xi + \int_{\Omega}\xi\cdot \nabla g_{j} = 0\,.
\end{align*}
This shows that the pairing defined in \eqref{pairing1} only depends on the trace of $u$ on $\de\Omega$. Then by Anzellotti-Giaquinta's approximation in $BV(\Omega)$ and by Theorem \ref{thm:Mazya2011} we infer that $\langle \xi,u\rangle_{\de\Omega} = \langle \xi,v\rangle_{\de\Omega}$ whenever $u,v\in BV(\Omega)$ have the same trace on $\de\Omega$. 

At this point we can show the continuity of the pairing \eqref{pairing1} in the topology of $L^{\infty}(\Omega;\R^{n})\times L^{1}(\de\Omega)$. The following, key lemma extends \cite[Lemma 5.5]{Anzellotti83}.
\begin{lem}\label{lem:AnzLem5.5gen}
Let $\Omega$ be weakly regular. Then for every $u\in L^{1}(\de\Omega)$ and $\e>0$ there exists $w_{\e}\in BV(\Omega)\cap C^{\infty}(\Omega)$ such that 
\begin{itemize}
\item[(i)]
the trace of $w_{\e}$ on $\de\Omega$ equals $u$ $\H^{n-1}$-almost everywhere on $\de\Omega$,

\item[(ii)] 
$\int_{\Omega} |\nabla w_{\e}| \le \int_{\de\Omega}|u|\, +\e$,

\item[(iii)] 
$w_{\e}(x) = 0$ whenever $\dist(x,\de\Omega)>\e$,

\item[(iv)] 
$\int_{\Omega}|w_{\e}| \le \e$,

\item[(v)] 
$\|w_{\e}\|_{L^{\infty}(\Omega)} \le \|u\|_{L^{\infty}(\de\Omega)}$.
\end{itemize} 
\end{lem}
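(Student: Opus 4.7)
The plan is to construct $w_\e$ as a cutoff of an extension of $u$ to $\R^n$, concentrated in a thin tubular neighborhood of $\de\Omega$, and then to smooth it. First, by Theorem~\ref{thm:Mazya2011} there exists $\Psi\in W^{1,1}(\R^n)$ with trace $u$ on $\de\Omega$. If $u\in L^\infty(\de\Omega)$, I would truncate $\Psi$ at level $\|u\|_{L^\infty(\de\Omega)}$, which preserves the trace and simultaneously decreases $\|\Psi\|_\infty$ and $\|\nabla\Psi\|_{L^1}$.

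Next, setting $d(x):=\dist(x,\de\Omega)$ and, for a small parameter $\eta>0$ to be chosen, $\zeta_\eta(x):=\max\{0,1-d(x)/\eta\}$, define $v_\eta:=\Psi\,\zeta_\eta$. The function $v_\eta$ is Lipschitz, supported in $\{d<\eta\}\subset\Omega$, has trace $u$ on $\de\Omega$ (since $\zeta_\eta$ has boundary value $1$), and satisfies $|v_\eta|\le|\Psi|\le\|u\|_\infty$. Because $\H^{n-1}(\de\Omega)<\infty$ forces $|\de\Omega|=0$, dominated convergence yields $\int_\Omega|v_\eta|\le\int_{\{d<\eta\}}|\Psi|\to 0$ as $\eta\to 0$, so properties (i), (iii), (iv), (v) are immediate for $v_\eta$ once $\eta$ is small enough.

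The crux is the gradient bound (ii). The Leibniz rule gives
\[
\int_\Omega|\nabla v_\eta|\le\int_{\{d<\eta\}}|\nabla\Psi|+\frac{1}{\eta}\int_{\{0<d<\eta\}}|\Psi|,
\]
where the first summand is infinitesimal as $\eta\to 0$ by dominated convergence. For the second, since $d$ is $1$-Lipschitz with $|\nabla d|=1$ a.e. in $\Omega$, the coarea formula rewrites it as
\[
\frac{1}{\eta}\int_0^\eta \Big(\int_{\{d=s\}\cap\Omega}|\Psi|\,d\H^{n-1}\Big)\,ds.
\]
The \emph{main obstacle} is to show that this quantity is at most $\int_{\de\Omega}|u|\,d\H^{n-1}+\e/2$ for $\eta$ small enough: in the Lipschitz case this follows from flat local coordinates and a partition of unity on $\de\Omega$, but here it requires choosing $\Psi$ carefully adapted to the weakly regular geometry. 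The natural route is to build $\Psi$ from a Whitney decomposition of the $\eta$-neighborhood of $\de\Omega$ into cubes whose sidelengths are comparable to the distance from $\de\Omega$, assigning to each cube a local average of $u$; alternatively, one may use Theorem~\ref{thm:interiorapprox} to replace $\{d<\eta\}$ by the annular region $\Omega\setminus\Omega_\eta$ and exploit $P(\Omega_\eta)\to P(\Omega)$ together with $|\Omega\setminus\Omega_\eta|\to 0$ to absorb the error.

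Finally, $v_\eta$ is only Lipschitz. To upgrade to $w_\e\in C^\infty(\Omega)$, I apply the Anzellotti-Giaquinta strict approximation of $BV$ functions by smooth functions with the same trace, obtaining $v_\eta^{(k)}\in C^\infty(\Omega)\cap W^{1,1}(\Omega)$ with trace $u$ on $\de\Omega$ and $\int_\Omega|\nabla v_\eta^{(k)}|\to\int_\Omega|\nabla v_\eta|$. Choosing first $\eta$ small (say, $\eta=\e/2$ to absorb the various infinitesimal errors) and then $k$ large, $w_\e:=v_\eta^{(k)}$ satisfies all five conclusions. The genuinely hard point throughout is the sharp factor $1$ in front of $\int_{\de\Omega}|u|$ in (ii); it is precisely here that the weak regularity assumptions \eqref{eq:key} and \eqref{eq:weakmildreg} enter the picture, rather than merely the availability of some $W^{1,1}$-extension of $u$.
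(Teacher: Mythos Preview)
Your outline correctly isolates the difficulty---the sharp constant $1$ in (ii)---but leaves it essentially unresolved. The distance-function cutoff $\zeta_\eta$ produces the term $\frac{1}{\eta}\int_{\{0<d<\eta\}}|\Psi|$, and under hypotheses \eqref{eq:key} and \eqref{eq:weakmildreg} alone there is no reason for $\frac{1}{\eta}\,|\{x\in\Omega:d(x)<\eta\}|$ even to stay bounded as $\eta\to 0$: that would amount to the inner Minkowski content condition $P(\Omega)=\MC(\Omega)$, which is \emph{not} assumed in this lemma (it enters the paper only later, in Theorem~\ref{thm:maxtrace}). Hence your coarea integral may blow up even for bounded $\Psi$, and neither of the two fixes you mention (a Whitney-type extension, or ``replacing $\{d<\eta\}$ by $\Omega\setminus\Omega_\eta$'') is actually carried out; both are nontrivial under merely \eqref{eq:weakmildreg}.

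The paper's proof avoids the distance function altogether and builds the cutoff directly from Schmidt's approximation: one takes $\Omega_\delta\subset\subset\Omega$ from Theorem~\ref{thm:interiorapprox}, sets $\chi_{\delta,\eta}=\chi_{\Omega_\delta}*\rho_\eta$, and defines $w_\e=\Psi\,(1-\chi_{\delta,\eta})$, which is already smooth in $\Omega$ (so no post-hoc Anzellotti--Giaquinta step is needed, and (iii), (v) come for free). The dangerous term becomes $\int_\Omega|\Psi|\,|\nabla\chi_{\delta,\eta}|$. To control it one further approximates $\Psi$ in $W^{1,1}(\R^n)$ by smooth $\Psi_j$: for fixed $j$, the measures $|\nabla\chi_{\delta,\eta}|\,dx$ converge weakly-$*$ to $|D\chi_\Omega|$ as $\delta,\eta\to 0$ (strict convergence, since $P(\Omega_\delta)\to P(\Omega)$), giving $\int|\Psi_j|\,d|D\chi_\Omega|=\int_{\de\Omega}|\Psi_j|\,d\H^{n-1}$, and then continuity of the trace sends this to $\int_{\de\Omega}|u|$. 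The point is that Schmidt's theorem is used not merely to produce an annulus of small volume, but to supply cutoffs whose gradients have \emph{total mass} close to $P(\Omega)$---exactly what the distance cutoff fails to guarantee in this generality.
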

\begin{proof}
Let us fix $\e>0$. By Theorem \ref{thm:Mazya2011} (ii) there exists $\Psi\in W^{1,1}(\R^{n})$ such that its trace on $\de\Omega$ coincides with $u$. Up to an application of Meyer-Serrin's approximation theorem, we can additionally assume that $\Psi\in C^{\infty}(\Omega)$. Moreover we fix a sequence $\{\Psi_{j}\}_{j}$ of smooth functions such that $\|\Psi - \Psi_{j}\|_{W^{1,1}(\R^{n})}\to 0$ as $j\to\infty$. Again by Theorem \ref{thm:Mazya2011} (ii) we have that the trace operator from $BV(\Omega)$ to $L^{1}(\de\Omega)$ is continuous, hence
\[
\int_{\de\Omega} |\Psi_{j}|\, d\H^{n-1} \to \int_{\de\Omega} |\Psi |\, d\H^{n-1} = \int_{\de\Omega} |u |\, d\H^{n-1}\qquad \text{as $j\to\infty$.}
\]
Given $\delta,\eta>0$ we define $\chi_{\delta,\eta}(x) = \chi_{\Omega_{\delta}} * \rho_{\eta}(x)$, where $\rho_{\eta}$ is a standard symmetric mollifier with support in $B_{\eta}(0)$, while 
$\Omega_{\delta}\subset\subset \Omega$ is obtained in virtue of Theorem \ref{thm:interiorapprox}, so that the Hausdorff distance between $\de\Omega_{\delta}$ and $\de\Omega$ is smaller than $\delta$ and $|P(\Omega_{\delta}) - P(\Omega)|\le \delta$. We note that up to choosing $\delta$ and $\eta$ small enough we get $\spt(\chi_{\delta,\eta})\subset\subset \Omega$, $\chi_{\delta,\eta} = 1$ on the set $\{x\in \Omega:\ \dist(x,\de\Omega)>\e\}$, and $\Big|\int_{\Omega}|\nabla \chi_{\delta,\eta}| - P(\Omega_{\delta})\Big| \le \delta$. 
Then we define $w_{\delta,\eta}(x) = \Psi(x)(1-\chi_{\delta,\eta}(x))$ and, for any fixed vector field $g\in C^{1}(\R^{n};\R^{n})$ with $\|g\|_{\infty}\le 1$ and compact support in $\Omega$, up to choosing $\delta$ and $\eta$ small enough as well as $j$ sufficiently large we obtain
\begin{align*}
\int_{\Omega} \nabla w_{\delta,\eta}\cdot g\, dx &= \int_{\Omega} (1-\chi_{\delta,\eta})\, \nabla\Psi\cdot g\, dx - \int_{\Omega} \Psi\, \nabla\chi_{\delta,\eta}\cdot g\, dx\\ 
&\le \int_{\Omega} (1-\chi_{\delta,\eta})\, |\nabla\Psi| - \int_{\Omega} \Psi_{j}\, \nabla\chi_{\delta,\eta}\cdot g\, dx - \int_{\Omega} (\Psi - \Psi_{j})\, \nabla\chi_{\delta,\eta}\cdot g\, dx\\ 
&\le \frac{\e}{4} + \int_{\Omega} |\Psi_{j}|\, |\nabla\chi_{\delta,\eta}|\, dx + \int_{\Omega} \chi_{\delta,\eta} \Big(\nabla(\Psi - \Psi_{j})\cdot g + (\Psi - \Psi_{j})\div g\Big)\, dx\\ 
&\le \frac{\e}{4} + \int_{\Omega} |\Psi_{j}|\, |\nabla\chi_{\delta,\eta}|\, dx + (1+\|\div g\|_{\infty})\int_{\Omega} \Big(|D(\Psi - \Psi_{j})| + |\Psi - \Psi_{j}|\Big)\, dx\\ 
&\le  \int_{\Omega} |\Psi_{j}|\, |\nabla\chi_{\delta,\eta}|\, dx + \frac{\e}{2} \le \int |\Psi_{j}|\, d|D\chi_{\Omega}| + \frac{3}{4}\e \le \int_{\de\Omega} |u|\, d\H^{n-1} + \e\,.
\end{align*}
We finally set $w_{\e} = w_{\delta,\eta}$ and, by taking the supremum over $g$, we find
\[
\int_{\Omega} |\nabla w_{\e}|\, dx \le \int_{\de\Omega} |u|\, d\H^{n-1} + \e\,,
\]
which proves (ii). Finally, (i), (iii) and (v) are immediate from the construction, while (iv) is easily shown to hold up to possibly taking smaller $\delta$ and $\eta$.
\end{proof}

Now, given $\e>0$ and $\phi\in BV(\Omega)\cap L^{\infty}(\Omega)$, taking $w_{\e}$ as in Lemma \ref{lem:AnzLem5.5gen} (with $u=\phi$ on $\de\Omega$), and setting $\Omega_{\e} = \{x\in \Omega:\dist(x,\de\Omega)\ge \e\}$ we obtain
\begin{align*}
|\langle \xi,\phi\rangle_{\de\Omega}| &= |\langle \xi,w_{\e}\rangle_{\de\Omega}| \\
&\le \|\phi\|_{L^{\infty}(\Omega)}\int_{\Omega\setminus\Omega_{\e}} |\div \xi|\ + \|\xi\|_{L^{\infty}(\Omega)} \int_{\Omega} |\nabla w_{\e}|\\
&\le \|\phi\|_{L^{\infty}(\Omega)}\,\int_{\Omega\setminus\Omega_{\e}} |\div \xi|\ + \|\xi\|_{L^{\infty}(\Omega)}\left(\int_{\de\Omega}|\phi|\, +\e\right)\,,
\end{align*}
which by the arbitrary choice of $\e$ leads to 
\begin{equation}\label{stimatraccianor}
|\langle \xi,\phi\rangle_{\de\Omega}|\le \|\xi\|_{L^{\infty}(\Omega)}\, \int_{\de\Omega}|\phi|\,.
\end{equation}
One can check by a truncation argument that \eqref{stimatraccianor} holds for each $\phi\in BV(\Omega)$. An immediate consequence of \eqref{stimatraccianor} is the fact that the linear functional $N_{\xi}:L^{1}(\de\Omega)\to \R$ defined as $N_{\xi}(u) = \langle \xi,u\rangle_{\de\Omega}$ is continuous on $L^{1}(\de\Omega)$, thus it can be represented by a function in $L^{\infty}(\de\Omega)$, hereafter denoted by $[\xi\cdot \nu]$. This function is the so-called \textit{weak normal trace} of the vector field $\xi\in X(\Omega)$ on $\de\Omega$. Another immediate consequence of \eqref{stimatraccianor} is the following $L^{\infty}$-estimate of the weak normal trace:
\begin{equation}\label{ntracestimaLinf}
\|[\xi\cdot\nu]\|_{L^{\infty}(\de\Omega)} \le \|\xi\|_{L^{\infty}(\Omega)}\,.
\end{equation}
Summing up, we have proved that \eqref{pairing1} can be rewritten in the form of the generalized Gauss-Green formula stated in the next theorem.
\begin{thm}\label{thm:GaussGreen}
Let $\Omega\subset \R^{n}$ be open, bounded and weakly regular. Let $\xi\in X(\Omega)$ and $\phi\in BV(\Omega)$, then
\begin{equation}\label{GGformula}
\int_{\Omega} \phi\, \div \xi\,  + \int_{\Omega} \xi\cdot D\phi = \int_{\de\Omega} \phi\, [\xi\cdot \nu]\, d\H^{n-1}\,.
\end{equation}
\end{thm}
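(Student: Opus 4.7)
The plan is to realize the right-hand side of \eqref{GGformula} as the value at the trace of $\phi$ of a continuous linear functional on $L^{1}(\de\Omega)$, and then to invoke Riesz representation. The candidate functional is $N_{\xi}(u) := \langle \xi, u \rangle_{\de\Omega}$, defined via the pairing \eqref{pairing1}. Since the left-hand side of \eqref{GGformula} is by definition $\langle \xi, \phi\rangle_{\de\Omega}$, everything reduces to showing (a) that this pairing depends only on the boundary trace of $\phi$, and (b) that it is controlled by the $L^{1}(\de\Omega)$ norm of that trace.

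For step (a), I would first consider $u, v \in W^{1,1}(\Omega)$ having the same trace on $\de\Omega$: Swanson's characterization (Theorem \ref{thm:W110}) allows me to approximate $u - v$ by a sequence $g_{j} \in C^{\infty}_{c}(\Omega)$ in the weak $BV$ sense, and a standard integration by parts shows that each $\langle \xi, g_{j}\rangle_{\de\Omega}$ vanishes; passing to the limit and using $\div\xi \in L^{\infty}(\Omega)$ yields $\langle \xi, u - v\rangle_{\de\Omega} = 0$. The extension to general $u, v \in BV(\Omega)$ with equal trace then follows by Anzellotti-Giaquinta smooth approximation combined with the continuity of the trace operator from Theorem \ref{thm:Mazya2011}.

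For step (b), the key is to plug into the pairing a carefully chosen representative $w_{\e}$ of the trace of $\phi$ that is concentrated in a thin neighborhood of $\de\Omega$, has small $L^{1}$ norm, and whose total variation is only slightly larger than $\int_{\de\Omega}|\phi|\, d\H^{n-1}$. Lemma \ref{lem:AnzLem5.5gen} delivers precisely such a $w_{\e}$, and the estimate
\[
|\langle \xi, \phi\rangle_{\de\Omega}| = |\langle \xi, w_{\e}\rangle_{\de\Omega}| \le \|\phi\|_{L^{\infty}(\Omega)}\!\int_{\Omega \setminus \Omega_{\e}}\!|\div\xi| + \|\xi\|_{L^{\infty}(\Omega)}\!\left(\int_{\de\Omega}\!|\phi|\, d\H^{n-1} + \e\right)
\]
(established first for bounded $\phi$ and extended by truncation to general $\phi\in BV(\Omega)$) yields \eqref{stimatraccianor} upon letting $\e \to 0^{+}$. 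At this point $N_{\xi}$ is a continuous linear functional on $L^{1}(\de\Omega)$ with norm at most $\|\xi\|_{\infty}$, so the Riesz representation theorem produces $[\xi\cdot\nu] \in L^{\infty}(\de\Omega)$ with $N_{\xi}(u)=\int_{\de\Omega} u\,[\xi\cdot\nu]\, d\H^{n-1}$. Applied to the trace of $\phi$, this is exactly \eqref{GGformula}, with the estimate \eqref{ntracestimaLinf} as a byproduct.

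The main obstacle is not the theorem itself, whose proof is a packaging of the Riesz step, but the technical Lemma \ref{lem:AnzLem5.5gen}: extending Anzellotti's Lipschitz-boundary construction to the weakly regular setting requires the interior smooth approximation of $\Omega$ (Theorem \ref{thm:interiorapprox}) together with the trace extension statement of Theorem \ref{thm:Mazya2011}, and these are exactly where the hypotheses \eqref{eq:key} and \eqref{eq:weakmildreg} are used. Once that lemma is available, the theorem follows immediately.
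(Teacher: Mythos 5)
Your proposal follows the paper's own route precisely: the pairing \eqref{pairing1} is shown to depend only on the boundary trace via Swanson's theorem and the Anzellotti--Giaquinta approximation, the $L^{1}(\de\Omega)$-continuity estimate \eqref{stimatraccianor} is obtained from Lemma \ref{lem:AnzLem5.5gen} (plus a truncation to pass from bounded to general $\phi\in BV(\Omega)$), and the Riesz representation theorem then yields $[\xi\cdot\nu]\in L^{\infty}(\de\Omega)$ and \eqref{GGformula}. You have also correctly identified that the whole weight of the weak-regularity hypotheses \eqref{eq:key}--\eqref{eq:weakmildreg} sits in Lemma \ref{lem:AnzLem5.5gen}, which requires Theorem \ref{thm:interiorapprox} and Theorem \ref{thm:Mazya2011}; the theorem itself is then just the Riesz packaging step, as in the paper.
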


The next proposition shows that the weak normal trace is a proper extension of the normal component of the usual trace of $\xi$ on $\de\Omega$, whenever such a trace exists in measure-theoretic sense. 
\begin{prop}\label{prop:classiclimit}
Let $\Omega\subset \R^{n}$ be open, bounded and weakly regular. Let $\xi\in X(\Omega)$ and let $z\in \de^{*}\Omega$ be a Lebesgue point for the weak normal trace $[\xi\cdot \nu]$. Assume 
\begin{equation}\label{eq:aplim}
\aplim_{x\to z} \xi(x) = w\,,
\end{equation} 
then
\begin{equation}\label{weaktracerep}
[\xi\cdot\nu](z) = w \cdot \nu(z)\,.
\end{equation}
\end{prop}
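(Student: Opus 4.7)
The plan is to apply the generalized Gauss-Green formula \eqref{GGformula} with test function $\phi = \chi_{B_r(z)} \in BV(\Omega)\cap L^\infty(\Omega)$ and extract \eqref{weaktracerep} by dividing by $\omega_{n-1}r^{n-1}$ and sending $r \to 0^+$. For almost every $r > 0$, one has $\H^{n-1}(\de B_r(z) \cap \de\Omega) = 0$, the set $B_r(z) \cap \Omega$ has finite perimeter, and the $BV$-trace of $\chi_{B_r(z)}$ on $\de\Omega$ coincides $\H^{n-1}$-a.e.\ with $\chi_{\de\Omega \cap B_r(z)}$, so that \eqref{GGformula} reads
\[
\int_{\Omega \cap B_r(z)} \div\xi\, dx + \int_{\Omega} \xi \cdot D\chi_{B_r(z)} = \int_{\de\Omega \cap B_r(z)} [\xi \cdot \nu]\, d\H^{n-1}.
\]

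The bulk term $\int_{\Omega \cap B_r(z)} \div\xi\, dx$ is $O(r^n)$ since $\div\xi \in L^\infty(\Omega)$, so it vanishes after division by $\omega_{n-1}r^{n-1}$. On the right-hand side, Federer's Theorem \ref{thm:Fed} together with assumption \eqref{eq:weakmildreg} yields $\H^{n-1}(\de\Omega \setminus \de^*\Omega) = 0$, and combining the Lebesgue point property of $[\xi \cdot \nu]$ at $z$ with the density $P(\Omega; B_r(z))/(\omega_{n-1}r^{n-1}) \to 1$ one obtains
\[
\frac{1}{\omega_{n-1}r^{n-1}} \int_{\de\Omega \cap B_r(z)} [\xi \cdot \nu]\, d\H^{n-1} \longrightarrow [\xi \cdot \nu](z).
\]

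The heart of the argument is the middle term $\int_\Omega \xi \cdot D\chi_{B_r(z)}$, which I would split as $\int_\Omega w \cdot D\chi_{B_r(z)} + \int_\Omega (\xi - w) \cdot D\chi_{B_r(z)}$. Since $D\chi_{B_r(z)}$ restricted to $\Omega$ equals $-\nu_{B_r(z)} \H^{n-1} \lfloor (\de B_r(z) \cap \Omega)$, one has
\[
\int_\Omega w \cdot D\chi_{B_r(z)} = -w \cdot \int_{\de B_r(z) \cap \Omega} \nu_{B_r(z)}\, d\H^{n-1} = w \cdot \int_{B_r(z) \cap \de^*\Omega} \nu_\Omega\, d\H^{n-1},
\]
where the second equality is obtained by applying Theorem \ref{thm:DeGiorgi}(iv) to the constant field $w$ on $B_r(z) \cap \Omega$, whose reduced boundary decomposes (up to $\H^{n-1}$-null sets) as $(\de B_r(z) \cap \Omega) \cup (B_r(z) \cap \de^*\Omega)$. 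Dividing by $\omega_{n-1}r^{n-1}$, this tends to $w \cdot \nu(z)$ thanks to the vector-valued limit $D\chi_\Omega(B_r(z))/(\omega_{n-1}r^{n-1}) \to -\nu(z)$, a direct consequence of Theorem \ref{thm:DeGiorgi}.

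The remainder $\int_\Omega (\xi - w) \cdot D\chi_{B_r(z)}$ is bounded in modulus by $\int_{\de B_r(z) \cap \Omega} |\xi - w|\, d\H^{n-1}$. Since $\xi$ is essentially bounded and $\aplim_{x \to z} \xi = w$, a standard Chebyshev argument yields $\int_{B_R(z) \cap \Omega} |\xi - w|\, dx = o(R^n)$, and the coarea formula applied to $x \mapsto |x - z|$ gives
\[
\int_0^R \int_{\de B_r(z) \cap \Omega} |\xi - w|\, d\H^{n-1}\, dr = o(R^n),
\]
so that a sequence $r_k \to 0^+$ can be selected along which $\int_{\de B_{r_k}(z) \cap \Omega} |\xi - w|\, d\H^{n-1} = o(r_k^{n-1})$, killing the remainder after division by $\omega_{n-1}r_k^{n-1}$. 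Passing to the limit along $r_k$ in the divided Gauss-Green identity yields $w \cdot \nu(z) = [\xi \cdot \nu](z)$, which is \eqref{weaktracerep}. The main obstacle is precisely the passage from the volumetric approximate-continuity datum at $z$ to a sphere-integral estimate; the coarea trick circumvents it by extracting a favorable sequence of radii.
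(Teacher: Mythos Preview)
Your argument is correct. Both proofs localize the Gauss--Green identity near $z$ and compare the boundary term with the interior flux, but the test functions and the handling of the remainder differ. The paper works in rescaled coordinates and uses the Lipschitz cutoff $\phi(y)=\max(0,\min(1,(1-|y|)/\delta))$, so that $D\phi$ is absolutely continuous and the error $\int (\xi_i - w)\cdot D\phi$ becomes a \emph{volume} integral over a $\delta$-annulus, which the approximate-limit hypothesis controls directly; the price is the auxiliary parameter $\delta$ (chosen as $\alpha^{1/2}$) and a double limit. You instead take $\phi=\chi_{B_r(z)}$, which is cleaner (no rescaling, no $\delta$), but forces the remainder onto the sphere $\partial B_r(z)\cap\Omega$; you then recover the needed smallness by the coarea formula and a good-radius selection. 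Your route is more elementary and keeps the geometry explicit, while the paper's mollified cutoff yields convergence along \emph{every} sequence $r\to 0$ without subsequence extraction; either device is adequate here since the conclusion is a pointwise identity at $z$. One minor remark: when you select the radii $r_k$ via coarea, make sure to choose them simultaneously in the full-measure set where $\H^{n-1}(\partial B_r(z)\cap\partial\Omega)=0$ and the trace of $\chi_{B_r(z)}$ is $\chi_{\partial\Omega\cap B_r(z)}$; this is harmless since the set of admissible radii in each dyadic interval $[R/2,R]$ has positive measure.
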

\begin{proof}
We can assume that $z=0$ up to a translation. We fix a sequence $r_{i}\downarrow 0$ as $i\to\infty$. Given any function (or vector field) $f$ defined in $\Omega$, we set 
\[
\Omega_{i} = r_{i}^{-1}\Omega,\qquad 
f_{i}(y) = f(r_{i}y)\,.
\]
We note that $D f_{i}(y) = r_{i} Df(r_{i}y)$ in the sense of distributions. By \eqref{eq:aplim} we infer that for all $\alpha>0$ the set 
\[
N_{i}(\alpha) = r_{i}^{-1}N(\alpha) = \{y\in \Omega_{i}:\ |\xi_{i}(y) - w|\ge \alpha\}
\]
satisfies
\begin{equation}\label{Nivazero}
\lim_{i\to\infty}|N_{i}(\alpha) \cap B_{1}| = 0\,.
\end{equation}
On the other hand, the fact that $z=0$ is by assumption a Lebesgue point for $[\xi\cdot \nu]$ implies that
\begin{equation}\label{tracelebesgue}
[\xi\cdot\nu](0) = \lim_{i\to\infty} \mu_{i}^{-1}\int_{\de\Omega_{i}\cap B_{1}} [\xi\cdot\nu]_{i}(y)\, d\H^{n-1}(y)\,,
\end{equation}
where $\mu_{i} = \H^{n-1}(\de\Omega_{i}\cap B_{1})$. Now we take $\delta\in (0,1)$ and set $\alpha = \delta^{2}$ and
\[
\phi(y) = \max(0,\min(1,(1-|y|)/\delta))\,.
\]
By Theorem \ref{thm:DeGiorgi}(ii), setting $H = H_{\nu(0)}$ for brevity, we obtain
\begin{equation}\label{phiOmegaH}
\left|\int_{\Omega_{i}\cap B_{1}}D\phi(x)\, dx - \int_{H\cap B_{1}}D\phi(x)\, dx \right| \le \delta^{-1}|(\Omega_{i}\difsim H)\cap B_{1}| = m_{i}(\delta)\to 0\qquad \text{as }i\to\infty\,.
\end{equation}
Moreover by Theorem \ref{thm:GaussGreen} we get for a suitable constant $C>0$
\begin{align}\notag
\left|\int_{H\cap B_{1}} D\phi(x)\, dx - \omega_{n-1}\nu(0)\right| &= \left|\int_{\de H\cap B_{1}}\phi(x)\, d\H^{n-1}(x)  - \omega_{n-1}\right|\\\notag
&= \omega_{n-1}\int_{0}^{1}[1-(1-\delta t)^{n-1}]\, dt
\\\label{phisudeH}
&\le C\delta\,.
\end{align}
Then by \eqref{tracelebesgue}, \eqref{phiOmegaH}, \eqref{phisudeH}, and Theorem \ref{thm:DeGiorgi}(ii), we find 
\begin{align}\nonumber
\omega_{n-1}\Big|[\xi\cdot\nu](0) - w\cdot \nu(0)\Big| &\le \left| \lim_{i\to\infty}\int_{\de\Omega_{i}\cap B_{1}}[\xi\cdot\nu]_{i}\, d\H^{n-1} - w\cdot \int_{H\cap B_{1}} D\phi(x)\, dx \right| + C\delta \\\nonumber 
&\le \limsup_{i\to\infty}\left|\int_{\Omega_{i}\cap B_{1}}\phi\, \div\xi_{i}\right| + \left|\int_{\Omega_{i}\cap B_{1}} (\xi_{i}-w)\cdot D\phi \right| + m_{i}(\delta) + C\delta\\\label{eq:errgauss}
&= \limsup_{i\to\infty}\big(A_{i} + B_{i}+m_{i}(\delta)\big) + C\delta\,.
\end{align}
Then we notice that $A_{i} + m_{i}(\delta)\to 0$ as $i\to\infty$, while
\begin{align*}
B_{i} &= 
\left|\int_{(\Omega_{i}\cap B_{1})\setminus N_{i}(\alpha)} (\xi_{i}-w)\cdot D\phi  + \int_{N_{i}(\alpha)\cap B_{1}} (\xi_{i}-w)\cdot D\phi\right|\\
&\le \frac{\omega_{n}\alpha}{\delta} + \frac{2\|\xi\|_{\infty}}{\delta} |N_{i}(\alpha)\cap B_{1}|\\
&\le \omega_{n}\delta + \frac{2\|\xi\|_{\infty}}{\delta} |N_{i}(\alpha)\cap B_{1}|\,.
\end{align*}
Therefore by passing to the limit as $i\to\infty$ in \eqref{eq:errgauss} and using \eqref{Nivazero} we finally get
\[
\omega_{n-1}\Big|[\xi\cdot\nu](0) - w\cdot \nu(0)\Big| \le  (\omega_{n}+C)\delta\,,
\]
which implies \eqref{weaktracerep} at once by the arbitrary choice of $\delta\in (0,1)$.
\end{proof}

In general, the weak normal trace $[\xi\cdot \nu]$ of a vector field $\xi\in X(\Omega)$ at $x\in \de\Omega$ does not coincide to any pointwise, almost-everywhere, or measure-theoretic limit of the scalar product $\xi(y)\cdot \nu(x)$, as $y\to x$. However, one should expect some weak-type convergence of the normal component of $\xi$ to the value of $[\xi\cdot \nu]$ at any Lebesgue point $x_0\in \de^*\Omega$. More precisely, let $\Omega_h$ be a sequence of relatively compact, open subsets of $\Omega$ with smooth boundary, that converge to $\Omega$ both in perimeter and volume (see Theorem \ref{thm:interiorapprox}). We can consider the corresponding sequence of Radon measures $\mu_h = \langle \xi, \nu_h\rangle \, \H^{n-1}\llcorner \de\Omega_h$. By Theorem \ref{thm:GaussGreen} one easily checks that $\mu_h$ weakly-$*$ converges to $\mu = [\xi\cdot\nu]\, \H^{n-1}\llcorner \de^* \Omega$ as $h\to\infty$. 

By a similar application of Theorem \ref{thm:GaussGreen} (simply take $\phi = \chi_{B_r(x_0)}$) one can more explicitly characterize the weak normal trace at $\H^{n-1}$-almost every point $x_0\in \de^*\Omega$ as the following limit of spherical averages, as pointed out for instance in \cite{ChenTorresZiemer09}:
\[
[\xi\cdot \nu](x_0) = \lim_{r\to 0} \frac{1}{\omega_{n-1}r^{n-1}} \int_{\de B_r(x_0)\cap \Omega} \xi(x) \cdot \frac{x-x_0}{|x-x_0|}\, d\H^{n-1}(x)\,.
\]
Nevertheless, such a characterization of the weak normal trace is not fully satisfactory, as one would expect to obtain coincidence with the classical trace in some special cases (see in particular the characterization of extremality discussed in Section \ref{section:extremality}). A more specific study of weak normal traces will appear in \cite{LeoSar_trace}.

\subsection{The weak Young's law for $(\Lambda,r_0)$-minimizers}
\label{section:appendix}

Let us start recalling the definition of $(\Lambda,r_0)$-minimizer of the perimeter.

\begin{defin}\label{def:LambdaMinimizer}
	Let $\Omega\subset \R^{n}$ be an open set of locally finite perimeter, and let $E$ be a measurable subset of $\Omega$. We say that $E$ is a $(\Lambda, r_0)$-perimeter minimizer in $\overline{\Omega}$ if there exist two constants $\Lambda \in [0, +\infty)$ and $r_0 >0$ such that for every $x\in \R^{n}$, every Borel set $F$ such that $F\Delta E$ is compactly contained in $B_r(x)\cap \overline{\Omega}$, and every $r< r_0$, one has
	\[
	P(E; B_r(x)) \leq P(F; B_r(x)) +\Lambda |F\Delta E|\,.
	\] 
\end{defin}

\begin{thm}[Weak Young's Law]\label{thm:lambdamin}
	Let $\Omega$ be an open set with locally finite perimeter and let $E$ be a $(\Lambda, r_0)$-minimizer in $\overline{\Omega}$. Then $\partial E \cap \Omega$ meets $\partial^* \Omega$ in a tangential way, i.e., for any $x\in \partial^* \Omega \cap \overline{(\partial E\cap \Omega)}$ one has that $x\in \partial^* E$ and $\nu_E (x) = \nu_\Omega (x)$.
\end{thm}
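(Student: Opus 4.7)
The plan is to establish the theorem by a blow-up argument at $x_{0}\in \de^{*}\Omega\cap \overline{\de E\cap \Omega}$, exploiting the natural scaling of $(\Lambda,r_{0})$-minimality. For any sequence $r_{k}\downarrow 0$, I set $E_{k} := r_{k}^{-1}(E-x_{0})$ and $\Omega_{k} := r_{k}^{-1}(\Omega-x_{0})$. Since $x_{0}\in \de^{*}\Omega$, Theorem \ref{thm:DeGiorgi}(ii) gives $\chi_{\Omega_{k}}\to \chi_{H}$ in $L^{1}_{loc}(\R^{n})$ with $H=H_{\nu_{\Omega}(x_{0})}$. The $(\Lambda,r_{0})$-minimality of $E$ in $\overline{\Omega}$ rescales to $(\Lambda r_{k}, r_{0}/r_{k})$-minimality of $E_{k}$ in $\overline{\Omega_{k}}$; the classical density estimates for $(\Lambda,r_{0})$-minimizers yield uniform perimeter bounds $P(E_{k};B_{R})\le C(R)$, so up to extracting a not relabeled subsequence one has $\chi_{E_{k}}\to \chi_{E_{\infty}}$ in $L^{1}_{loc}(\R^{n})$, and $E_{\infty}\subset \overline{H}$ because $E_{k}\subset \Omega_{k}$. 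Passing to the limit in the minimality inequalities, and using $\Lambda r_{k}\to 0$, the set $E_{\infty}$ turns out to be a local perimeter minimizer in $\overline{H}$, i.e. $P(E_{\infty};B_{R})\le P(F;B_{R})$ whenever $F\subset \overline{H}$ and $F\difsim E_{\infty}\subset\subset B_{R}$.

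The heart of the argument is the claim that $E_{\infty}=H$ up to a Lebesgue-null set. To rule out $E_{\infty}=\emptyset$, the hypothesis $x_{0}\in\overline{\de E\cap\Omega}$ provides a sequence $y_{k}\in \de E\cap \Omega$ with $y_{k}\to x_{0}$. A dichotomy on the ratio $\dist(y_{k},\de\Omega)/|y_{k}-x_{0}|$ (and, if necessary, the replacement of $y_{k}$ with a nearby interior point of $E$ at a comparable distance, furnished by the density lower bound for $(\Lambda,r_{0})$-minimizers applied at $y_{k}$) allows me to choose the scales $r_{k}$ so that the rescaled points $z_{k}:= r_{k}^{-1}(y_{k}-x_{0})$ cluster at some $z_{\infty}$ lying at positive distance from $\de H$. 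The density lower bound $|E_{k}\cap B_{\rho}(z_{k})|\ge c\rho^{n}$, valid at boundary points in the interior of $\Omega_{k}$ and for $\rho$ below the distance to $\de\Omega_{k}$, then passes to the limit and gives $|E_{\infty}\cap B_{\rho}(z_{\infty})|>0$ for small $\rho$, so that $E_{\infty}\ne \emptyset$.

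To conclude $E_{\infty}=H$, I will invoke the classification of local perimeter minimizers in a closed half-space with free boundary on $\de H$. Reflecting $E_{\infty}$ across $\de H$ produces an unconstrained local perimeter minimizer in $\R^{n}$ that is symmetric with respect to the reflection; a further blow-up at the origin yields a reflection-symmetric area-minimizing cone which, by classical regularity theory together with the symmetry constraint, is forced to be a hyperplane parallel to $\de H$. Undoing the reflection gives $E_{\infty}\in \{\emptyset, H\}$ up to null sets, and the preceding step then forces $E_{\infty}=H$. Since the blow-up limit is independent of the chosen subsequence $r_{k}$, the full rescaled family $\{E_{x_{0},r}\}_{r>0}$ converges to $H$ in $L^{1}_{loc}(\R^{n})$ as $r\to 0^{+}$; this is exactly the statement that $x_{0}\in \de^{*}E$ with $\nu_{E}(x_{0})=\nu_{H}=\nu_{\Omega}(x_{0})$, which completes the proof.

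The step I expect to be the main obstacle is the non-triviality of the blow-up $E_{\infty}$. A naive choice of scales $r_{k}$ may drive $z_{k}$ onto $\de H$ whenever $y_{k}$ approaches $\de\Omega$ much faster than it approaches $x_{0}$, in which case no useful density lower bound survives the passage to the limit. The dichotomy-based selection of $r_{k}$, together with the possible secondary choice of interior mass points provided by the $(\Lambda,r_{0})$-minimizer density estimates, are the technical ingredients needed to circumvent this degeneration. The classification of local perimeter minimizers in the half-space, although standard, is the other delicate point, as it hinges on the reflection principle and regularity theory for area-minimizing cones.
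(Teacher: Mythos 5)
Your overall strategy — blow up at $x_{0}$, extract a limit set $E_{\infty}\subset\overline{H}$, classify it as the half-space $H$, and deduce the conclusion — is the same one the paper follows, and the density estimates plus compactness of $(\Lambda,r_{0})$-minimizers you invoke are essentially the paper's Step~1. But the final implication you draw contains a genuine gap. You assert that $L^{1}_{loc}$-convergence of the full rescaled family $\{E_{x_{0},r}\}_{r}$ to $H$ ``is exactly the statement that $x_{0}\in\de^{*}E$ with $\nu_{E}(x_{0})=\nu_{\Omega}(x_{0})$''. It is not. De~Giorgi's theorem (Theorem~\ref{thm:DeGiorgi}\textup{(ii)}) gives the implication $x\in\de^{*}E\ \implies\ \chi_{E_{x,r}}\to\chi_{H_{\nu_{E}(x)}}$ in $L^{1}_{loc}$, but the converse fails pointwise: $L^{1}_{loc}$-convergence only gives, by weak-$*$ convergence of the gradient measures, $D\chi_{E_{x,r}}(B_{1})\to -\omega_{n-1}\nu_{H}$ together with the lower bound $\liminf_{r}|D\chi_{E_{x,r}}|(B_{1})\ge\omega_{n-1}$; without a \emph{matching upper bound} on the perimeter density, the ratio $D\chi_{E}(B_{r}(x))/|D\chi_{E}|(B_{r}(x))$ need not converge to a unit vector. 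This is precisely what the paper's Steps~2 and~3 supply: a delicate cut-and-paste comparison using the minimality of $E$ and the fact that $x\in\de^{*}\Omega$ establishes $\lim_{r\to0}P(E;B_{r}(x))/r^{n-1}=\omega_{n-1}$, and only then can one conclude $x\in\de^{*}E$ with $\nu_{E}(x)=\nu_{\Omega}(x)$. Your proof stops short of that argument, so it does not actually prove $x_{0}\in\de^{*}E$.

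Two secondary remarks. First, your dichotomy machinery for the non-triviality of $E_{\infty}$ is both more complicated than the paper's route and not clearly correct: if the chosen points $y_{k}$ approach $\de\Omega$ faster than they approach $x_{0}$, the rescaled points $z_{k}$ may still drift to $\de H$, and your proposed replacement by ``nearby interior mass points at comparable distance'' is not justified if $E$ itself concentrates near $\de\Omega$. The paper instead proves the lower volume density estimate $|E\cap B_{r}(x)|\ge c\,r^{n}$ directly at the boundary point $x$, exploiting the fact that $(\Lambda,r_{0})$-minimality in $\overline{\Omega}$ allows the competitor $E\setminus B_{r}(x)$ (whose symmetric difference with $E$ sits inside $\overline{\Omega}$) even when $B_{r}(x)\not\subset\Omega$; this immediately yields a non-trivial blow-up and bypasses your dichotomy. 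Second, the reflection-plus-cone-classification step for showing $E_{\infty}=H$ is left entirely at the level of a sketch, and as stated it is not clear that a perimeter minimizer in the closed half-space, reflected across $\de H$, becomes an unconstrained local minimizer in $\R^{n}$, nor why a reflection-symmetric area-minimizing cone must be the hyperplane $\de H$ (rather than a hyperplane containing the normal direction, or a singular cone in high dimensions). The paper handles this classification by citing a maximum-principle argument combined with the convexity of $H$, which is the standard and cleaner route.
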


\begin{proof}
	Let us fix a point $x \in \partial^* \Omega \cap \partial E$ and let $x+H$ be the half space obtained by blowing up $\Omega$ around $x$. We divide the proof in three steps. In the first one we prove that $E$ and $\Omega$ have the same tangential space at $x$, while in the third one we prove that $x$ is in $\partial^* E$ and that the outward normal is equal to the one outward $\Omega$. Step 2 provides a tool to prove Step 3.\\
	\emph{Step 1.} Let us prove that $E$ has the same tangent space $x+H$ at $x$. In order to do so, we need to prove perimeter and volume density estimates for $E \subset \Omega$ at $x$.
	Fix $m(r) := |E \cap B_r(x)|$ so that one has $P(E;\partial B_r(x)) = 0$, $m'(r) = P(E\cap B_r(x), \partial B_r(x))$ and $m(r) > 0$ for almost every $r>0$. Being $E$ a $(\Lambda, r_0)$-minimizer, for any $r<r_0$ and any competitor $F$, such that $F\Delta E \subset\subset B_r(x)\cap \overline{\Omega}$, one obtains
	\[
	P(E; B_r(x)) \leq P(F; B_r(x)) + \Lambda|F\Delta E|.
	\]
	Fix two radii, $r_2 < r_1 < r_0$ and consider as competitor in $B_{r_1}(x)\cap \overline{\Omega}$ the set $F:= E\setminus B_{r_2}$. Therefore, exploiting the $\Lambda$-minimality one has
	\[
	P(E;B_{r_1}(x)) \leq P(F;B_{r_1}(x)) + \Lambda |E\Delta F| \leq P(E;B_{r_1}(x) \setminus B_{r_2}(x)) + m'(r_2) + \Lambda m(r_2).
	\]
	Thus
	\begin{equation}\label{eq:boundperimeter}
	P(E; B_{r_2}(x)) = P(E;B_{r_1}(x)) - P(E;B_{r_1}(x) \setminus B_{r_2}(x))\leq \Lambda m(r_2) +m'(r_2).
	\end{equation}
	Due to the latter and to the isoperimetric inequality, it follows
	\begin{align}
	c_1 m(r_2)^{\frac{n-1}{n}} &= c_1 |E\cap B_{r_2}(x)|^{\frac{n-1}{n}} \leq P(E\cap B_{r_2}(x)) \nonumber \\
	&= P(E; B_{r_2}(x)) + P(E\cap B_{r_2}(x); \partial B_{r_2}(x)) \leq \Lambda m(r_2) + 2m'(r_2).
	\end{align}
	Hence for $r_2$ small enough and for some uniform constant $c_2$ we have
	\[
	\frac{m'(r_2)}{m(r_2)^{\frac{n-1}{n}}} \geq c_2.
	\]
	By integrating this inequality on $(\rho/2, \rho)$ we obtain for $\rho$ small enough the volume density estimate
	\[
	m(\rho) \ge c_3\rho^n\,,
	\]
	where $c_3$ is a uniform constant.

	Regarding the perimeter, directly from ~(\ref{eq:boundperimeter}) one can infer that $P(E; B_{r_2}) \leq \Lambda \omega_n r_2^n +m'(r_2)$, which, for $r_2$ small enough implies
	\[
	P(E; B_{r_2}) \leq c_4 r_2^{n-1},
	\]
	which then yields the perimeter density estimate.
	
	Now blowing up $E$ at $x$ we find a limit set $E_\infty$ contained in the half-space $x+H$ with $x\in \partial E_\infty$. It can be shown that $E_\infty$ is not empty and minimizes the perimeter without volume constraint with respect to any compact variation contained in $x+H$. By convexity of $H$ and by a maximum principle argument \cite{Simon} one infers that $E$ admits $x+H$ as unique blow up at the point $x$.
	\\
	\emph{Step 2.} Let us prove that
	\begin{equation}\label{eq:step2}
	\lim_{r\to 0} \frac{P(E; B_r(x))}{r^{n-1}} = \omega_{n-1}
	\end{equation}
	holds. Let $E_r$ be $r^{-1}(E-x)$. Since the blow up of $E$ at $x$ is the half space $x+H$ one has the $L^1_{loc}$-convergence $\chi_{E_r} \to \chi_H$ as $r$ goes to $0$. By the lower semi-continuity of the perimeter we have
	\[
	\liminf_{r\to 0} \frac{P(E_; B_r(x))}{r^{n-1}} = \liminf_{r\to 0} P(E_r; B_1(0)) \geq P(H; B_1(0)) \geq \omega_{n-1},
	\]
	therefore to prove ~(\ref{eq:step2}) it is enough to show that
	\begin{equation}
	\limsup_{r\to 0} P(E_r; B_1(0)) \leq \omega_{n-1}.
	\end{equation}
	Argue by contradiction and suppose there exists a sequence of radii $r_i$ going to $0$ such that
	\begin{equation}
	P(E_{r_i}; B_1(0)) \geq \omega_{n-1} +\epsilon.
	\end{equation}
	Recall that $x \in \partial^* \Omega$, therefore for $r_i$ small enough one has
	\begin{equation}
	P(\Omega_{r_i}; B_s(0)) \leq s^{n-1}\omega_{n-1} +\epsilon/3, \qquad \text{for all $1<s<2$,}
	\end{equation}
	where $\Omega_{r_i}$ is defined in the same manner of $E_{r_i}$. Due to the $L^1$-convergence in $B_2(0)$ of $\chi_{E_{r_i}}$ to $\chi_H$ and by coarea formula one can find a suitable
	\[
	t\in \Bigg(1, \Big( \frac{\omega_{n-1} + \epsilon/2}{\omega_{n-1} + \epsilon/3}\Big)^{\frac{1}{n-1}}\Bigg)
	\]
	such that
	\begin{equation}
	P(\Omega_i; \partial B_t(0)) = P(E_i; \partial B_t(0)) = 0
	\end{equation}
	\begin{equation}
	\H^{n-1}(E_i\Delta \Omega_i \cap \partial B_t(0)) < \frac{\epsilon}{4}
	\end{equation}
	hold. Consider now the sets $F_i := (E\cup B_{tr_i}(x)) \cap \Omega$, for which, due to the previous, one has
	\[
	P(F_i, B_{r_0}(x)) = P(E; (\Omega \cap B_{r_0}(x))\setminus B_{tr_i}(x)) + P(\Omega; B_{tr_i}(x)) + r_i^{n-1}\H^{n-1}(E_i\Delta \Omega_i \cap \partial B_t(0)).
	\]
	For $r_i$ small enough that $tr_i < r_0$, the set $F_i$ is a competitor to $E$ in $B_{r_0}$, therefore
	\begin{align}
	r^{n-1}(\omega_{n-1} + \epsilon) &\leq P(E; B_{r_i}(x)) \leq P(E; B_{r_0}(x)) - P(E; (\Omega \cap B_{r_0}(x))\setminus B_{tr_i}(x))\nonumber \\ 
	&\le P(F; B_{r_0}(x)) -P(E; (\Omega \cap B_{r_0}(x))\setminus B_{tr_i}(x)) + \Lambda |F\Delta E|\nonumber \\ 
	&\le P(F; B_{r_0}(x))-P(E; (\Omega \cap B_{r_0}(x))\setminus B_{tr_i}(x)) + \Lambda|E\cap B_{tr_i}(x)| \nonumber \\ 
	& \le P(\Omega; B_{tr_i}(x)) + r_i^{n-1}\frac{\epsilon}{4} + \Lambda \omega_n(tr_i)^n \\ 
	&\le (tr_i)^{n-1}(\omega_{n-1} +\epsilon/3) + r_i^{n-1}\frac{\epsilon}{4} + \Lambda \omega_n(tr_i)^n \nonumber \\ 
	& < r_i^{n-1}(\omega_{n-1} +\epsilon/2) + r_i^{n-1}\frac{\epsilon}{2} \leq r^{n-1}(\omega_{n-1} + \epsilon), \nonumber
	\end{align}
	which leads to a contradiction.
	\\
	\emph{Step 3.} Owing to \eqref{eq:step2}, in order to show that $x\in \de^{*}E$ and that $\nu_E(x) = \nu_\Omega(x)$ it is enough to prove that
	\begin{equation}\label{entp}
	\lim_{r\to 0} \frac{D\chi_E(B_r(x)) \cdot v}{\omega_{n-1}r^{n-1}} = 1\,,
	\end{equation}
	where we have set $v = -\nu_\Omega(x)$. 
	In virtue of Theorem \ref{thm:DeGiorgi} (iv), for almost every $r>0$ one has
	\begin{align}
	D\chi_E(B_r(x))\cdot v &= \int_{E\cap \de B_r(x)} v\cdot N\, d\H^{n-1} = \int_{H\cap \de B_r(0)} v\cdot N\, d\H^{n-1} +A(x,r) \label{eq:prima}\\
	&= \omega_{n-1}r^{n-1} +A(x,r)\,, \nonumber
	\end{align}
	where $N$ is the outward normal to $\de B_r(x)$ and
	\begin{align*}
	|A(x,r)| &=\left|v\cdot \int_{\de B_r(x)} (\chi_E(y) - \chi_{x+H}(y))N(y)\, d\H^{n-1}(y) \right|\\
	&\leq \int_{\de B_r(x)} |\chi_E(y) - \chi_{x+H}(y)| \, d\H^{n-1}(y)\,.
	\end{align*}
	Now for any fixed $\delta >0$, define the set $\Sigma(x, \delta) \subseteq (0, +\infty)$ of radii $r>0$ such that $A(x,r)>\delta r^{n-1}$. Hence, by the $L^1_{\text{loc}}$-convergence of $r^{-1}(E-x)$ to the half-space $H$ we infer that
	\[
	\lim_{\rho\to 0^+} \frac{\H^1(\Sigma(x,\delta)\cap (0, \rho))}{\rho} = 0\,.
	\]
	Therefore, for any decreasing infinitesimal sequence of radii $\{r_i\}_i$ we can find another sequence $\{\rho_i\}_i$ such that $\rho_i \notin \Sigma(x,\delta)$ for all $i$ and $\rho_i = r_i +o(r_i)$ as $i\to \infty$. Suppose by contradiction that \eqref{entp} does not hold. Then, there exist $\alpha >0$ and a decreasing infinitesimal sequence $\{r_i\}_i$ such that 
	\begin{equation}\label{eq:contr}
	\left|	\frac{D\chi_E(B_{r_i}(x)) \cdot v}{\omega_{n-1}r_i^{n-1}}\right| \geq \alpha\,,
	\end{equation}
	for all $i\in \N$. By suitably choosing $\delta$ as $\omega_{n-1}\alpha /2$ and considering the sequence $\rho_i$ defined above, one gets in \eqref{eq:prima} with the substitution $r=\rho_i$
	\[
	\left|D_{\chi_E} (B_{\rho_i}(x)) \cdot v -\omega_{n-1}\rho_i^{n-1}\right| = |A(x,\rho_i)| \leq \frac{\alpha}{2} \omega_{n-1}\rho_i^{n-1}\,.
	\]
	On the other hand, by \eqref{eq:step2}, we also have
	\[
	\left| D_{\chi_E}(B_{\rho_i}(x)) - D_{\chi_E}(B_{r_i}(x))\right| \leq P(E; B_{\rho_i}(x)\Delta B_{r_i}(x))\leq \omega_{n-1} |\rho_i^{n-1} -r_i^{n-1}| + o(r_i^{n-1}) = o(r_i^{n-1})
	\]
	as $i\to \infty$. Combining these two latter inequalities yields to
	\[
	\left| D_{\chi_E}(B_{r_i}(x))\cdot v - \omega_{n-1}r_i^{n-1}\right| \leq \frac{\alpha}{2}\omega_{n-1}\rho_i^{n-1} + o(r_i^{n-1}) =  \frac{\alpha}{2}\omega_{n-1}r_i^{n-1} + o(r_i^{n-1})\,,
	\]
	which contradicts \eqref{eq:contr} for $i$ large enough.
\end{proof}

\section{Existence Theorems}
This section is devoted to the proof of existence of solutions to the prescribed mean curvature equation \eqref{PMC}, that we recall here:
\begin{equation*}
\div Tu(x) = H(x),\qquad x\in \Omega\,.
\end{equation*}

In what follows we show that the weak regularity assumption, i.e. the validity of \eqref{eq:key} and \eqref{eq:weakmildreg}, coupled with the necessary condition \eqref{eq:necessarycondition} is enough to ensure existence of solutions to \eqref{PMC}. 

We will follow the argument of \cite{Giaquinta1974, Giusti1978}, which is based on the minimization of the functional
\begin{equation}\label{eq:functional}
\cJ[u] = \int_\Omega \sqrt{1+ |\grad u|^2}\ dx + \int_\Omega Hu\ dx +\int_{\partial \Omega} |u-\phi| \ d\H^{n-1},
\end{equation}
defined on $BV(\Omega)$, for a given $\phi\in L^{1}(\de\Omega)$. Note that the Euler-Lagrange equation of $\cJ$, obtained by perturbations with compact support in $\Omega$, is precisely equation \eqref{PMC}.  By Theorem \ref{thm:Mazya2011}, the last term in \eqref{eq:functional} is well-defined.

In the existence proof we will have first to discuss the easier \textit{non-extremal case}, in which the necessary condition \eqref{eq:necessarycondition} holds for the domain $\Omega$ as well, and then the more involved \textit{extremal case}, that is when \eqref{eq:ext} is satisfied.

First we need some preliminary results. The first one shows how to extend the necessary condition \eqref{eq:necessarycondition} to all measurable $A\subset \Omega$ such that $0<|A|<|\Omega|$. 
\begin{prop}\label{prop:subsets}
Let $\Omega\subset \R^{n}$ be a domain satisfying condition \eqref{eq:weakmildreg}. Assume that the necessary condition \eqref{eq:necessarycondition} holds for every $A\subset\subset \Omega$, then it also holds for every $A\subset \Omega$ such that $0<|A|<|\Omega|$. \end{prop}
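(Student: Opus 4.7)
My plan is to deduce \eqref{eq:necessarycondition} for $A\subset\Omega$ with $0<|A|<|\Omega|$ from the same condition applied to an interior approximation of $A$. Specifically, I would set $\Omega_t=\{x\in\Omega:\dist(x,\de\Omega)>t\}$ and $A_t=A\cap\Omega_t$; since $\overline{\Omega_t}\subset\subset\Omega$, each $A_t$ is relatively compact in $\Omega$ and the hypothesis gives $|\int_{A_t}H|<P(A_t)$. The task is then to extract a sequence $t_k\to 0^+$ along which both the integral and the perimeter converge to a quantity bounded above by $P(A)$, and to pass to the limit.

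We may assume $P(A)<\infty$ and $H\in L^1(A)$, otherwise the assertion is either trivial or void. By the coarea formula applied to the $1$-Lipschitz function $d_\Omega(\cdot)=\dist(\cdot,\de\Omega)$,
\[
\int_0^{t_0}\H^{n-1}(A^{(1)}\cap\de\Omega_t)\,dt\ \le\ |A\cap(\Omega\setminus\Omega_{t_0})|\ \xrightarrow[t_0\to 0^+]{}\ 0,
\]
and similarly $\int_0^{+\infty}\H^{n-1}(\de^*A\cap\de\Omega_t)\,dt\le P(A)<+\infty$. Hence I can choose $t_k\to 0^+$ such that both $\H^{n-1}(A^{(1)}\cap\de\Omega_{t_k})\to 0$ and $\H^{n-1}(\de^*A\cap\de\Omega_{t_k})\to 0$. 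Using Federer's decomposition of the reduced boundary of an intersection, and the fact that the level sets $\de\Omega_{t_k}$ are $\H^{n-1}$-rectifiable for a.e.\ $t_k$, one then obtains
\[
P(A_{t_k})\ =\ P(A;\Omega_{t_k})+\H^{n-1}(A^{(1)}\cap\de\Omega_{t_k})+o(1)\ \longrightarrow\ P(A;\Omega),
\]
while dominated convergence gives $\int_{A_{t_k}}H\,dx\to\int_A H\,dx$. Passing to the limit in the strict hypothesis $|\int_{A_{t_k}}H|<P(A_{t_k})$ yields
\[
\Big|\int_A H\,dx\Big|\ \le\ P(A;\Omega)\ \le\ P(A).
\]

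To upgrade to the strict inequality I split into two cases. If $P(A;\de\Omega)>0$ then $P(A;\Omega)=P(A)-P(A;\de\Omega)<P(A)$ and the strict inequality follows at once. In the delicate remaining case $P(A;\de\Omega)=0$, the reduced boundary of $A$ is $\H^{n-1}$-essentially contained in $\Omega$, and I would exploit the strict inclusion $|A|<|\Omega|$ (which gives ``room'' in $\Omega\setminus A$) through an auxiliary comparison: for a suitable relatively compact $E\subset\subset\Omega\setminus A$ with $|E|>0$, the set $A_{t_k}\cup E$ is $\subset\subset\Omega$ with $P(A_{t_k}\cup E)=P(A_{t_k})+P(E)$ whenever $E$ and $A_{t_k}$ are at positive distance, and combining the strict hypothesis on $A_{t_k}\cup E$ with the strict gap $P(E)-|\int_E H|>0$ should produce a quantitative defect surviving in the limit.

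The main obstacle is precisely this upgrade from $\le$ to $<$: a direct passage to the limit of strict inequalities only delivers the non-strict bound, so the strict conclusion of the proposition must be recovered either from the positive boundary contribution $P(A;\de\Omega)>0$, or from a finer comparison argument exploiting $|A|<|\Omega|$ as sketched above. The bulk of the proof is the standard interior-approximation-plus-coarea machinery, with this last point being the subtle one.
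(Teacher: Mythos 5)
Your argument contains a genuine gap at the very first step: the coarea bound
\[
\int_0^{t_0}\H^{n-1}(A^{(1)}\cap\de\Omega_t)\,dt\ \le\ |A\cap(\Omega\setminus\Omega_{t_0})|
\]
only shows that the left-hand integral tends to zero with $t_0$, which is automatic for any nonnegative integrable function; it does \emph{not} produce a sequence $t_k\to 0^+$ along which $\H^{n-1}(A^{(1)}\cap\de\Omega_{t_k})\to 0$. For that you would need $|A\cap(\Omega\setminus\Omega_{t_0})|=o(t_0)$, i.e.\ that $A$ has density zero along $\de\Omega$, which certainly fails whenever $P(A;\de\Omega)>0$. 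Concretely, take $\Omega=B_1\subset\R^2$ and $A=B_1\cap\{x_1>0\}$: then $\de\Omega_t=\de B_{1-t}$ and $\H^1(A^{(1)}\cap\de\Omega_t)=\pi(1-t)\to\pi>0$. This breaks the claimed convergence $P(A_{t_k})\to P(A;\Omega)$; the correct limit is the full perimeter $P(A)$, as the paper proves. The intermediate bound $|\int_A H|\le P(A;\Omega)$ that you deduce from it is therefore also false in general (with $H\equiv 2$ on $B_1$ and $A$ the half-disk, $\int_A H=\pi>2=P(A;\Omega)$, while $P(A)=2+\pi$ so \eqref{eq:necessarycondition} does hold). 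Consequently your ``easy case'' $P(A;\de\Omega)>0$ does not deliver strictness, and your second case is, as you acknowledge, only a sketch.

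The paper's proof proceeds differently on both counts. It sets $A_j=A\cap\Omega_j$ with $\Omega_j$ the Schmidt smooth interior approximants of $\Omega$, and uses the submodularity inequality $P(A_j)+P(A\cup\Omega_j)\le P(A)+P(\Omega_j)$ together with $P(\Omega_j)\to P(\Omega)$ and lower semicontinuity of the perimeter to show $P(A_j)\to P(A)$ (the full perimeter, not the relative one). More importantly, the strict inequality is not obtained from any slack between $P(A;\Omega)$ and $P(A)$, but from the vector field $Tu$: on a fixed compact $\Omega_{j_0}\subset\subset\Omega$ one has $\sup_{\Omega_{j_0}}|Tu|=\alpha<1$, and since $\Omega$ is connected and $0<|A|<|\Omega|$ one also has a uniform lower bound $P(A_j;\Omega_{j_0})\ge c>0$ for all large $j$. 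This produces the quantitative defect $|\int_{A_j}H|\le P(A_j)-(1-\alpha)c$ with the gap $(1-\alpha)c>0$ independent of $j$, and the strict inequality then passes to the limit. This is the key mechanism missing from your proposal: passing a strict inequality through a limit can only yield $\le$, so a uniform defect must be extracted \emph{before} the limit, and the paper locates it in the pointwise bound $|Tu|<1$ away from $\de\Omega$.
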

\begin{proof}
Let us fix a measurable set $A\subset \Omega$ with $0<|A|<|\Omega|$ and finite perimeter. By Theorem \ref{thm:interiorapprox} there exists a sequence $\{\Omega_{j}\}_{j\in\N}$ of relatively compact, smooth open subsets of $\Omega$, such that $|\Omega \setminus \Omega_{j}|\to 0$ and $P(\Omega_{j}) \to P(\Omega)$ as $j\to\infty$. Now take $A_{j} = A\cap \Omega_{j}$ and notice that $A_{j}\subset\subset \Omega$, $P(A_{j})<+\infty$, and $A_{j}\to A$ in $L^{1}$ as $j\to\infty$. Since 
\[
P(A_{j}) + P(A\cup \Omega_{j}) \le P(A) + P(\Omega_{j}),
\] 
and owing to the fact that $A\cup \Omega_{j}\to \Omega$ in $L^{1}$ as $j\to\infty$, we deduce that
\begin{align*}
P(A) &\le \liminf_{j} P(A_{j}) \le \limsup_{j} P(A_{j}) \le \limsup_{j} \Big( P(A) + P(\Omega_{j}) - P(A\cup \Omega_{j})\Big)\\
& = P(A) + P(\Omega) - \liminf_{j} P(A\cup \Omega_{j}) \le P(A) + P(\Omega) - P(\Omega) = P(A), 
\end{align*}
which proves that 
\begin{equation}\label{PAjtoPA}
\lim_{j}P(A_{j}) = P(A).
\end{equation}
Now we observe that $P(A;\Omega)>0$, which follows from the connectedness of $\Omega$ coupled with the fact that $0<|A|<|\Omega|$. Therefore owing to \eqref{proprSchmidt} we can assume that $P(A_{j};\Omega_{j_{0}}) \ge c>0$ for a suitably large $j_{0}$ and for all $j\ge j_{0}$, which means that 
\begin{align*}
\left|\int_{A_{j}}H\, dx\right| &= \left|\int_{\de^{*} A_{j}} \langle Tu,\nu\rangle\, d\H^{n-1}\right| \le P(A_{j};\R^{n}\setminus \Omega_{j_{0}}) + \int_{\de^{*} A_{j}\cap \Omega_{j_{0}}} |\langle Tu,\nu\rangle |\, d\H^{n-1}\\ 
&\le P(A_{j};\R^{n}\setminus \Omega_{j_{0}}) + \alpha P(A_{j};\Omega_{j_{0}}) = P(A_{j}) - (1-\alpha)c,
\end{align*}

where $\alpha<1$ is the supremum of $|\langle Tu,\nu\rangle |$ on $\Omega_{j_{0}}$. Since $|A_{j}| \to |A|$ as $j\to \infty$, by the necessary condition written for $A_{j}$, and passing to the limit as $j\to\infty$, we get by \eqref{PAjtoPA}
\begin{equation}\label{eq:disnonopt}
\left|\int_{A} H\, dx\right| \le P(A) - (1-\alpha)c < P(A),
\end{equation}
whence the conclusion follows.
\end{proof}

The next lemma corresponds to \cite[Lemma 1.1]{Giusti1978}, thus we omit its proof.
\begin{lem}\label{lemmagiusti1.1}
Let $\Omega$ be a domain such that $\left|\int_{A}H\, dx\right| < P(A)$ holds for all $A\subset\Omega$ with the property that $|A|>0$. Then there exists $\e_{0}>0$ such that the stronger inequality
\[
\left|\int_{A}H\, dx\right| \le (1-\e_{0})P(A)
\]
holds for all such $A$.
\end{lem}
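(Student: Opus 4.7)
The plan is to argue by contradiction via a $BV$-compactness argument on a near-saturating sequence, with the Euclidean isoperimetric inequality used to prevent mass collapse.

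Suppose the conclusion fails. Then for every $k\in\N$ there exists $A_k\subset \Omega$ with $|A_k|>0$ and
\[
\Bigl|\int_{A_k} H\, dx\Bigr| > \Bigl(1 - \tfrac{1}{k}\Bigr) P(A_k).
\]
Since $|\int_{A_k}H\, dx|\le \|H\|_{\infty}|\Omega|$ (we implicitly assume $H$ has enough integrability for the statement to be meaningful, as is standard in this setting), the perimeters $P(A_k)$ are uniformly bounded for $k\ge 2$. As all $A_k$ are contained in the bounded set $\overline{\Omega}$, the family $\{\chi_{A_k}\}$ is bounded in $BV(\R^n)$, so up to subsequences $\chi_{A_k}\to \chi_{A_{\infty}}$ in $L^1(\R^n)$ for some measurable $A_{\infty}\subset \Omega$, and by lower semicontinuity $P(A_{\infty})\le \liminf_k P(A_k)$.

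The first case I would rule out is $|A_{\infty}|=0$. By the Euclidean isoperimetric inequality, $P(A_k)\ge c_n |A_k|^{(n-1)/n}$. Combining this with the defining property of $A_k$ and with $|\int_{A_k} H\, dx|\le \|H\|_{\infty}|A_k|$ gives
\[
(1-1/k)\, c_n\, |A_k|^{(n-1)/n} < \|H\|_{\infty}\, |A_k|,
\]
that is $|A_k|^{1/n}\ge \frac{(1-1/k)\, c_n}{\|H\|_{\infty}}$, a uniform lower bound bounded away from zero; this contradicts $|A_k|\to |A_{\infty}|=0$.

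Hence $|A_{\infty}|>0$, so the hypothesis applies to $A_{\infty}$ and yields the strict inequality $|\int_{A_{\infty}} H\, dx|<P(A_{\infty})$. On the other hand, $L^1$-convergence of $\chi_{A_k}$ together with absolute continuity of $A\mapsto \int_A H\, dx$ gives $\int_{A_k} H\, dx \to \int_{A_{\infty}} H\, dx$, and passing to the limit in the defining inequality yields
\[
\Bigl|\int_{A_{\infty}} H\, dx\Bigr| \;=\; \lim_k \Bigl|\int_{A_k} H\, dx\Bigr| \;\ge\; \liminf_k (1-1/k)\, P(A_k) \;\ge\; P(A_{\infty}),
\]
a contradiction. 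I expect the only delicate point to be the exclusion of the degenerate case $|A_{\infty}|=0$; the isoperimetric inequality handles it cleanly, so no further regularity of $\Omega$ is needed beyond the standing assumptions. (Notice that the case $|A_{\infty}|=|\Omega|$, which would correspond to $A_{\infty}=\Omega$ up to measure zero since $\Omega$ coincides with its measure-theoretic interior, causes no trouble because the hypothesis is assumed for \emph{all} $A\subset\Omega$ with $|A|>0$, including $\Omega$ itself.)
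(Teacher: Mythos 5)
Your proof is correct; the paper does not reproduce a proof but cites \cite{Giusti1978} (Lemma 1.1), and your argument reconstructs exactly that compactness-by-contradiction scheme, including the use of the isoperimetric inequality to rule out the near-saturating sequence collapsing to a null set. The implicit requirement $H\in L^{\infty}(\Omega)$, which you correctly flag, is standard in this setting and matches Giusti's original hypotheses.
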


\begin{thm}[Existence, non-extremal case]\label{thm:subextremalexistence}
Let $\Omega$ be a weakly regular domain. If the necessary condition \eqref{eq:necessarycondition} holds also for $\Omega$, that is, we have the non-extremal condition \eqref{eq:nonext}, that is
\begin{equation*}
\left|\int_{\Omega} H\, dx\right| < P(\Omega),
\end{equation*} 
then the functional $\cJ$ defined in ~(\ref{eq:functional}) is minimized in $BV(\Omega)$.
\end{thm}

\begin{proof}
Fix a ball $B$ containing $\Omega$ and extend the function $H$ to $0$ in $B\setminus \Omega$. Fix a function $\Phi \in W^{1,1}_0(B)$ such that $\Phi = \phi$ on $\partial \Omega$ (this can be done according to Theorem \ref{thm:Mazya2011}). Then minimizing $\cJ$ on $BV(\Omega)$ is equivalent to minimizing $\cJt$ defined as
\[
\cJt: u \mapsto  \int_B \sqrt{1+ |\grad u|^2}\ dx + \int_B Hu\ dx,
\]
in $K = \{ u\in BV(B) | u = \Phi \quad \text{in $B\setminus \Omega$}\}$, which is a closed subset of $BV(B)$. Owing to Proposition \ref{prop:subsets} and by the assumption on $\Omega$ we can apply Lemma \ref{lemmagiusti1.1} and get the lower bound
\[
\int_\Omega Hu\ dx \geq -(1-\epsilon_0)\int_B |Du| - c\int_{\partial \Omega} |\phi|\ d\H^{n-1}
\]
for some $\e_{0}>0$, whence
\begin{equation}\label{eq:coercivity}
\cJt[u] \geq \epsilon_0 \int_B |Du|\ dx - c\int_{\partial \Omega} |\phi|\ d\H^{n-1}.
\end{equation}
Exploiting Poincar\'e's inequality on the ball $B$ one finally shows the coercivity of $\cJt$ in $L^1(\Omega)$. Since it is also lower semi-continuous within respect to the $L^{1}$-norm we infer the existence of a minimizer of $\cJt$ in $K$, hence of a minimizer of $\cJ$ in $BV(\Omega)$.
\end{proof}

In order to prove the existence of minimizers in the extremal case ~(\ref{eq:ext}), following \cite{Miranda1977} we introduce the notion of \textit{generalized solution} of \eqref{PMC}. For technical reasons, we consider the epigraph of $u$ instead of its subgraph, therefore the definition is slighty offset from the one in \cite{Miranda1977} (but of course equivalent up to changing the minus sign in \eqref{eq:epifunctional}).

\begin{defin}
A function $u : \Omega \to [-\infty, +\infty]$ is said to be a generalized solution to \eqref{PMC} if the epigraph of $u$
\[
U = \{(x,y) \in \Omega \times \mathbb{R}:\ y>u(x)\},
\]
minimizes the functional
\begin{equation}\label{eq:epifunctional}
P(U) - \int_{U} H \ dx\, dy,
\end{equation}
locally in $\Omega \times \mathbb{R}$.
\end{defin}

It is clear that any classical solution to \eqref{PMC} is also a generalized solution. Moreover, any generalized solution of \eqref{PMC} can be shown to satisfy some key properties, that we collect in the following proposition (see \cite{Giusti1978} and \cite{Miranda1964, Miranda1977} for the proof).

\begin{prop}\label{prop:Miranda}
Let $u$ be a generalized solution of \eqref{PMC} and define $N_{\pm} = \{x\in \Omega:\ u(x) = \pm\infty\}$. Then the following properties hold.
\begin{itemize}
\item[(i)] If $x\in N_{\pm}$ then $|N_{\pm}\cap B_{r}(x)|>0$ for all $r>0$.
\item[(ii)] The set $N_{\pm}$ minimizes the functional 
\[
E \mapsto P(E) \pm \int_{E} H\, dx
\]
locally in $\Omega$.
\item[(iii)] The function $u$ is smooth on $\Omega\setminus(N_{+}\cup N_{-})$.
\item[(iv)] Given a sequence $\{u_{k}\}$ of generalized solutions of \eqref{PMC}, then up to subsequences the epigraphs $U_{k}$ of $u_{k}$ converge to an epigraph $U$ of a function $u$ locally in $L^{1}(\Omega\times \R)$, moreover $u$ is a generalized solution of \eqref{PMC}.
\item[(v)] If $u$ is locally bounded, then $u$ is a classical solution of \eqref{PMC}. 
\end{itemize}
\end{prop}

The next lemma is a straightforward adaptation of \cite[Lemma 1.2]{Giusti1978}. The proof is the same up to choosing a sequence $\{\Omega_{j}\}_{j}$ as provided by Theorem \ref{thm:interiorapprox} with $\e = 1/j$.
\begin{lem}\label{lem:Giusti1.2}
Let $\Omega$ and $H(x)$ be such that \eqref{eq:necessarycondition}, \eqref{eq:weakmildreg} and \eqref{eq:ext} hold. Let $E\subset\Omega$ be a set of finite perimeter minimizing the functional
\[
P(E) - \int_{E}H\, dx
\]
locally in $\Omega$. Then either $E=\emptyset$ or $E=\Omega$, up to null sets.
\end{lem}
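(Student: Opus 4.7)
The plan is to argue by contradiction, assuming $0<|E|<|\Omega|$, and combine the local minimality of $E$ with interior smooth approximations $\Omega_j$ to conclude that $P(E)\le \int_E H$, which will contradict the strict inequality $|\int_E H|<P(E)$ provided by Proposition \ref{prop:subsets}. The extremal condition \eqref{eq:ext} fixes a sign $\epsilon\in\{-1,+1\}$ with $\int_\Omega H=\epsilon\,P(\Omega)$; I treat $\epsilon=+1$ explicitly and note that $\epsilon=-1$ is handled symmetrically by using $E\setminus\Omega_j$ in place of $E\cup\Omega_j$ and looking at $\Omega\setminus E$ at the end.

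First I would invoke Theorem \ref{thm:interiorapprox} with $\delta=1/j$ to produce $\Omega_j\subset\subset\Omega$ smooth, with $|\Omega\setminus\Omega_j|\to 0$ and $P(\Omega_j)\to P(\Omega)$. The key observation is that the competitor $F_j=E\cup\Omega_j$ satisfies $E\triangle F_j=\Omega_j\setminus E\subset\subset\Omega$, hence is a legitimate local perturbation. Local minimality then gives
\[
P(E\cup\Omega_j)\ \ge\ P(E)+\int_{\Omega_j\setminus E} H.
\]
Combining this with the submodularity of perimeter
\[
P(E\cap\Omega_j)+P(E\cup\Omega_j)\le P(E)+P(\Omega_j)
\]
and rearranging, I obtain
\[
P(E\cap\Omega_j)\ \le\ P(\Omega_j)-\int_{\Omega_j\setminus E} H.
\]

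Next I exploit the extremal identity $P(\Omega)=\int_\Omega H$ to write
\[
P(\Omega_j)-\int_{\Omega_j} H=\bigl(P(\Omega_j)-P(\Omega)\bigr)+\int_{\Omega\setminus\Omega_j} H\ =\ o(1),
\]
the first summand being $o(1)$ by Theorem \ref{thm:interiorapprox} and the second by integrability of $H$ on $\Omega$ (which is implicit in \eqref{eq:ext}). Splitting $\int_{\Omega_j} H=\int_{E\cap\Omega_j} H+\int_{\Omega_j\setminus E} H$ gives
\[
P(E\cap\Omega_j)\ \le\ \int_{E\cap\Omega_j} H + o(1).
\]
Since $E\cap\Omega_j\to E$ in $L^1(\Omega)$, the lower semicontinuity of the perimeter and dominated convergence yield $P(E)\le \int_E H$. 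But Proposition \ref{prop:subsets} forces $\int_E H\le |\int_E H|<P(E)$, a contradiction.

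The main obstacle is assembling the submodularity step and the extremal asymptotics $P(\Omega_j)-\int_{\Omega_j}H=o(1)$ in a compatible way, so that the extra term $\int_{\Omega_j\setminus E}H$ cancels correctly and leaves an inequality on $E$ alone after passing to the limit; the rest is routine. For the opposite sign $\epsilon=-1$, the analogous argument with the competitor $E\setminus\Omega_j$ and the submodularity pair $(E^{\mathsf c},\Omega_j)$ yields $P(\Omega\setminus E)\le -\int_{\Omega\setminus E}H$, again contradicting Proposition \ref{prop:subsets} applied to the set $\Omega\setminus E$, which satisfies $0<|\Omega\setminus E|<|\Omega|$.
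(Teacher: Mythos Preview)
Your argument is correct and is essentially the same as the paper's (which defers to Giusti's original proof, replacing his inner parallel sets by the interior smooth approximations $\Omega_j$ from Theorem~\ref{thm:interiorapprox}): compare $E$ with $E\cup\Omega_j$ via local minimality, combine with the submodularity inequality $P(E\cap\Omega_j)+P(E\cup\Omega_j)\le P(E)+P(\Omega_j)$, use $P(\Omega_j)-\int_{\Omega_j}H\to 0$ from \eqref{eq:ext}, and pass to the limit by lower semicontinuity to contradict Proposition~\ref{prop:subsets}. Your handling of the sign $\epsilon=-1$ via the competitor $E\setminus\Omega_j$ and the pair $(E^{\mathsf c},\Omega_j)$ is also fine.
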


We now come to the existence of solutions of \eqref{PMC} in the extremal case. 

\begin{thm}[Existence, extremal case]\label{thm:extremal}
Let $\Omega$ be a weakly regular domain. Assume that \eqref{eq:necessarycondition} is satisfied and that the extremal condition \eqref{eq:ext} holds. Then there exists a solution $u$ of \eqref{PMC}.
\end{thm}

\begin{proof}
By Theorem \ref{thm:interiorapprox} we find a sequence of smooth, connected sets $\Omega_j\subset\subset \Omega$, such that $|\Omega \setminus \Omega_j|\to 0$ and $P(\Omega_{j})\to P(\Omega)$ as $j\to +\infty$. Since \eqref{eq:necessarycondition} holds for any $A \subset \Omega_j$ (and in particular for $A = \Omega_j$), in virtue of Theorem \ref{thm:subextremalexistence} (existence in the non-extremal case) we find a minimizer $u_j\in BV(\Omega_j)$ of $\cJ$ restricted to $BV(\Omega_j)$, as every $\Omega_{j}$ satisfies \eqref{eq:key}. Setting 
\[
t_{j} = \inf\Big\{t:\ |\{x\in \Omega_{j}:\ u_{j}(x)\ge t\}|\le |\Omega_{j}|/2\Big\}
\]
we obtain
\[
\min(|\{x\in \Omega_{j}:\ u_{j}(x)\ge t_{j}\}|,|\{x\in \Omega_{j}:\ u_{j}(x)\le t_{j}\}|) \ \ge\ |\Omega_{j}|/2\ \ge\ |\Omega|/4
\]
for all $j$ large enough. Therefore, we can consider the sequence of vertically translated functions $\{u_{j}(x)-t_{j}\}_{j}$ defined for $x\in \Omega_{j}$, and relabel it as $\{u_{j}\}_{j}$, so that 
\begin{equation}\label{eq:misurasoprasotto}
\min\Big(|\{x\in \Omega_{j}:\ u_{j}(x)\ge 0\}|,\ |\{x\in \Omega_{j}:\ u_{j}(x)\le 0\}|\Big)\ \ge\ |\Omega|/4
\end{equation}
for all $j$ large enough. Then, by applying Proposition \ref{prop:Miranda} (iv) on $\Omega_{j_{0}}$ for any fixed $j_{0}\in \N$, and by a diagonal argument, we infer that $u_{j}$ locally converges up to subsequences to a generalized solution $u$ as $j\to\infty$, in the sense that the epigraph $U_{j}$ locally converges to the epigraph of $u$ in $L^{1}_{loc}(\Omega\times \R)$ as $j\to\infty$. Let us set $N_{\pm} = \{x\in \Omega:\ u(x) = \pm\infty\}$ as in Proposition \ref{prop:Miranda}. We claim that $N_{\pm}$ are both empty, which in turn implies by Proposition \ref{prop:Miranda} (v) that $u$ is a classical solution of \eqref{PMC}. Indeed by Proposition \ref{prop:Miranda} (ii) the set $N_{-}$ minimizes the functional $P(E) - \int_{E}H\, dx$ defined for $E\subset\Omega$, thus by Lemma \ref{lem:Giusti1.2} we have either $N_{-}=\emptyset$ or $N_{-}=\Omega$. Similarly, the set $\Omega\setminus N_{+}$ minimizes $P(E) - \int_{E}H\, dx$ (this follows from the fact that $N_{+}$ minimizes $P(E) + \int_{E}H\, dx$), hence either $N_{+}=\Omega$ or $N_{+}=\emptyset$. By \eqref{eq:misurasoprasotto} we conclude that $N_{\pm} = \emptyset$, which proves our claim.
\end{proof}

\section{Characterization of extremality}
\label{section:extremality}
We have seen in the previous section that, given a domain $\Omega$ and a prescribed mean curvature function $H$, the condition \eqref{eq:necessarycondition} is necessary and sufficient for the existence of solutions to \eqref{PMC}, however the proof of this fact is different depending on the validity or not of the \textit{extremality condition} \eqref{eq:ext} (compare Theorems \ref{thm:subextremalexistence} and \ref{thm:extremal}). While in the non-extremal case the existence of solutions is genuinely variational, in the extremal case one recovers a solution as a limit of variational solutions defined on subdomains. Since extremality arises in physical models of capillarity for perfectly wetting fluids, the uniqueness and the stability of solutions with respect to suitable perturbations of the domain are of special interest in this case. 

In \cite{Giusti1976} Giusti showed that, assuming $C^{2}$ regularity of $\de\Omega$ and \eqref{eq:necessarycondition}, the extremality condition \eqref{eq:ext} is equivalent to a series of facts, and in particular to the uniqueness of the solution of \eqref{PMC} up to vertical translations.

Here we obtain essentially the same result only assuming that $\Omega$ is weakly regular. Before stating our main result, we present a list of properties using the same labels as those appearing in \cite{Giusti1978}. 

\begin{itemize}
\item[{\bf (E)}] \textit{(Extremality)} The pair $(\Omega,H)$ satisfies \eqref{eq:ext}, i.e., $\left|\int_{\Omega}H\, dx\right| = P(\Omega)$.
\item[{\bf (U)}] \textit{(Uniqueness)} The solution of \eqref{PMC} is unique up to vertical translations.
\item[{\bf (M)}] \textit{(Maximality)} $\Omega$ is maximal, i.e. no solution of  \eqref{PMC} can exist in any domain strictly containing $\Omega$.
\item[{\bf (V)}] \textit{(weak Verticality)} There exists a solution $u$ of \eqref{PMC} which is weakly vertical at $\partial \Omega$, i.e. 
\begin{equation*}
[Tu \cdot \nu] = 1\qquad \text{$\H^{n-1}$-a.e. on }\de\Omega\,,
\end{equation*}
where $[Tu \cdot \nu]$ is the weak normal trace of $Tu$ on $\de\Omega$. 
\item[{\bf (V')}] \textit{(integral Verticality)} There exists a solution $u$ of \eqref{PMC} and a sequence $\{\Omega_{i}\}_{i}$ of smooth subdomains, such that $\Omega_{i}\subset\subset \Omega$, $|\Omega\setminus \Omega_{i}|\to 0$, $P(\Omega_{i})\to P(\Omega)$, and 
\begin{equation*}
\lim_{i\to\infty}\int_{\partial \Omega_i} Tu (x) \cdot \nu \ d\H^{n-1} = P(\Omega),
\end{equation*}
as $i\to\infty$.
\end{itemize}

Then we come to the main result of this section.
\begin{thm}\label{thm:ACE}
Let $\Omega$ and $H$ be given, such that $\Omega$ is weakly regular and \eqref{eq:necessarycondition} holds. Then the properties (E), (U), (M), (V) and (V') are equivalent.
\end{thm}

Before proving Theorem \ref{thm:ACE} some further comments about properties (V) and (V') above are in order. In \cite{Giusti1978} the property (V) is stated in the stronger, pointwise form $Tu(x) = \nu(x)$ for all $x\in \de\Omega$ (moreover $\de\Omega$ is assumed of class $C^{2}$, hence $Tu$ can be continuously extended on $\de\Omega$ owing to well-known regularity results, see \cite{Emmer1976}) while (V') is stated by using the one-parameter family of inner parallel sets (which is again well-defined owing to the $C^{2}$-smoothness of $\de\Omega$).

The Maximum Principle Lemma that we state hereafter has been originally proved in \cite{Finn1965} and then in \cite{Giusti1978}. We remark that it remains valid under the weaker assumptions guaranteeing the interior smooth approximation property, in the sense of Theorem \ref{thm:interiorapprox}.
\begin{lem}[Maximum Principle]\label{lem:maxprinciple}
Let $\Omega\subset \R^{n}$ be open, bounded, connected and weakly regular. Let $u$ and $v$ be two functions of class $C^{2}(\Omega)$, such that $\div(Tu) \leq \div(Tv)$ in $\Omega$. Assume that $\partial \Omega = \Gamma_1 \cup \Gamma_2$ with $\Gamma_1$ relatively open in $\partial \Omega$, and $u,v \in C^0(\Omega \cup \Gamma_1)$ with $u\ge v$ on $\Gamma_1$. Assume further that
\[
\lim_{i\to \infty} \int_{\partial \Omega_i \setminus A} (1- Tu\cdot \nu)\ d\H^{n-1} = 0
\]
for every open set $A\supset \Gamma_1$, where $\{ \Omega_i \}_{i\in \N}$ is a sequence of smooth and relatively compact open subsets of $\Omega$, such that $|\Omega\setminus \Omega_{i}| \to 0$ and $P(\Omega_{i})\to P(\Omega)$ as $i\to\infty$. Then
\begin{itemize}
\item[(a)] if $\Gamma_1 \neq \emptyset$ then $u\geq v$ in $\Omega$;
\item[(b)] if $\Gamma_1 = \emptyset$ then $u= v+c$.
\end{itemize}
\end{lem}
\begin{proof}
In order to prove case (a) we first assume that $u>v$ on $\Gamma_1$. By the Gauss-Green formula on $\Omega_i$, for any positive function $\phi\in W^{1,\infty}(\Omega_i)$ one obtains
\begin{align*}
\int_{\Omega_i} (Tu - Tv)\cdot \nabla \phi &= - \int_{\Omega_i}\phi(\div Tu - \div Tv) + \int_{\de\Omega_i} \phi (Tu - Tv)\cdot \nu \\
& \ge \int_{\de\Omega_i} \phi (Tu - Tv)\cdot \nu \ge \int_{\de\Omega_i} \phi (Tu\cdot \nu -1)\,.
\end{align*}
Fix a positive constant $M>0$ and define the function $\phi_M(x) = \max\Big(0,\min(v-u,M)\Big)$. Of course $\phi_M \in W^{1,\infty}(\Omega_i)$ for all $i$ and $0\le \phi_M\le M$. Moreover, we can find an open set $A$ containing $\Gamma_1$ and such that $\phi_M = 0$ on $A\cap \Omega$. We also notice that
\[
(Tu - Tv)\cdot \nabla \phi_M = \begin{cases}
(Tu - Tv)\cdot (\nabla v - \nabla u) & \text{if } 0<v-u < M,\\
0 & \text{elsewhere}\,,
\end{cases}
\]
hence by a straightforward computation
\[
(Tu - Tv)\cdot \nabla \phi_M \le (|\nabla v| - |\nabla u|) \left(\frac{|\nabla u|}{\sqrt{1 + |\nabla u|^2}} - \frac{|\nabla v|}{\sqrt{1 + |\nabla v|^2}}\right) \le 0\,.
\]
Consequently, we obtain
\begin{align*}
\int_{\de\Omega_i\setminus A} \phi_M (Tu\cdot \nu -1) \le \int_{\Omega_i\setminus A} (Tu - Tv)\cdot \nabla \phi_M \le 0\,, 
\end{align*}
thus by taking the limit as $i\to\infty$ we find
\[
\int_{\Omega} (Tu - Tv)\cdot \nabla \phi_M = 0
\]
for all $M>0$. Therefore, setting $\phi = \max(v-u,0)$ we find $\nabla \phi = 0$ on $\Omega$, which means that $\phi$ is constant on $\Omega$. However, since $\phi = 0$ on $A\cap \Omega$ we deduce that $\phi = 0$, hence that $u\ge v$, on the whole $\Omega$. The full proof of case (a) is then completed by considering $v_\e = v+\e$ in place of $v$ and then letting $\e\to 0^+$.

Finally, for the proof of case (b) we fix $x_0\in \Omega$ and assume $v(x_0) = u(x_0) + 1$ up to a vertical translation. Arguing exactly as in the proof of case (a), 
we end up with $\phi$ constant on $\Omega$, where as before we set $\phi =\max(v-u,0)$. Since $\phi = \phi(x_0)=1$ we conclude that $v = u+1$ on $\Omega$, as wanted.

\end{proof}

We finally come to the proof of Theorem \ref{thm:ACE}. 
\begin{proof}[Proof of Theorem \ref{thm:ACE}] 
We shall split the proof in five steps.

\noindent
\textit{Step one: (E) $\Rightarrow$ (V').} Owing to (E) we have
\begin{align*}
P(\Omega) \stackrel{(E)}{=} \int_\Omega H\ dx = \lim_{i\to \infty} \int_{\Omega_i} H\ dx = \lim_{i\to \infty} \int_{\Omega_i} \div(Tu)\ dx = \lim_{i\to \infty} \int_{\partial \Omega_i} Tu (x) \cdot \nu \ d\H^{n-1}\,, 
\end{align*}
which implies (V'). 
\medskip

\noindent
\textit{Step two: (E) $\Leftrightarrow$ (M).} 
Let us start by showing (E)$\implies$(M). We argue by contradiction and suppose there exists a solution $u$ of \eqref{PMC} defined on $\widetilde{\Omega} \supsetneq \Omega$. Then Proposition \ref{prop:subsets} gives
\[
\left| \int_\Omega H\ dx \right | <  P(\Omega),
\]
which immediately contradicts (E). Let us now show the implication (M)$\implies$(E). Again by contradiction we assume that 
\[
\left|\int_{A}H\, dx\right| < P(A)
\]
for all $A\subset \Omega$. By Lemma \ref{lemmagiusti1.1} there exists $\e_{0}>0$ such that 
\begin{equation}\label{unomeneps1}
\left|\int_{A}H\, dx\right| < (1-\e_{0})P(A)
\end{equation}
for all $A\subset \Omega$. Now we claim that (compare with Lemma 2.1 in \cite{Giusti1978}) given a ball $B$ such that $\Omega\subset\subset B$, for all $0<\e<\e_{0}$ one can find an open set $\Omega_{\e}\subset B$ with smooth boundary, such that $\Omega\subset\subset \Omega_{\e}$ and 
\begin{equation}\label{unomeneps2}
\left|\int_{A}H\, dx\right| < (1-\e)P(A),\qquad \forall\, A\subset \Omega_{\e}.
\end{equation}
Of course, the validity of \eqref{unomeneps2} would allow us to apply Theorem \ref{thm:subextremalexistence} on $\Omega_{\e}$, which in turn would contradict our assumption (M). In order to show \eqref{unomeneps2} we argue again by contradiction, i.e., we assume that there exists $\e\in (0,\e_{0})$ such that, for every $U$ with smooth boundary satisfying $\Omega\subset\subset U$, one can find $A\subset U$ for which \eqref{unomeneps2} fails. In particular, for every $k\in \N$ we may choose a suitable $U_{k}$ as specified below, such that $\Omega\subset\subset U_{k}$, $|U_{k}\setminus \Omega|<1/k$, $\de U_{k}$ is smooth and there exists $A_{k}\subset U_{k}$ for which
\begin{equation}\label{unomeneps3}
\left|\int_{A_{k}}H\, dx\right| \ge (1-\e)P(A_{k})
\end{equation}
holds. By \eqref{unomeneps3} we have that 
\[
P(A_{k})\le \frac{|B|\sup_{B}|H|}{1-\e}\qquad \forall\, k\in \N\,,
\]
hence we can extract a not relabeled subsequence $A_{k}$ converging to some $A\subset B$ in $L^{1}$. On the other hand, since $|A_{k}\setminus \Omega|\le |U_{k}\setminus \Omega|\to 0$ as $k\to\infty$, we infer that $A\subset \Omega$ up to null sets. By \eqref{unomeneps3}, by the lower semi-continuity of the perimeter and by the continuity of the term $\int_{A_{k}}H\, dx$ with respect to $L^{1}$-convergence, we conclude that
\[
\left|\int_{A}H\, dx\right| \ge (1-\e)P(A)
\]
which is in contrast with \eqref{unomeneps1}. We are left to prove that such a sequence $U_{k}$ exists. To this aim we consider the open set $V = B\setminus \overline{\Omega}$ and notice that $P(V) = P(B) + P(\Omega) = \H^{n-1}(\de B)+\H^{n-1}(\de \Omega) = \H^{n-1}(\de V)$ owing to the assumption on $\Omega$. We can now apply Theorem \ref{thm:interiorapprox} to $V$ with $\delta_{k} = \min(\dist(\de B,\de\Omega)/3,1/k)$ and set $U_{k} = B\setminus (V_{\delta_{k}}\cup {\mathcal N}_{2\delta_{k}}(\de B))$. Thanks to \eqref{proprSchmidt} we find that $\de U_{k}$ is smooth, $\Omega\subset \subset U_{k}$ and $|U_{k}\setminus \Omega|<\delta_{k}\le 1/k$, as wanted. 
\medskip

\noindent
\textit{Step three: (V') $\implies$ (U).} We consider two solutions $u,v$ of \eqref{PMC}, then if we take $\Gamma_1 = \emptyset$ and thanks to the property $P(\Omega_{i})\to P(\Omega)$ as $i\to\infty$, we infer that the assumptions of Lemma \ref{lem:maxprinciple}(b) are satisfied. Consequently there exists a constant $c\in\R$ such that $u = v+c$. 
\medskip

\noindent
\textit{Step four: (U) $\implies$ (E).} Let $u$ be the unique solution of $\div(Tu) = H$ on $\Omega$, up to vertical translations. By contradiction we suppose that 
\[
\int_\Omega H \ dx < P(\Omega)\,.
\]
Arguing as in Step two we find a bounded and smooth domain $\tilde{\Omega} \supsetneq \Omega$ for which \eqref{eq:necessarycondition} holds. By Theorems \ref{thm:subextremalexistence} and \ref{thm:extremal} there exists a solution $\tilde{u}$ of $\div (T\tilde{u}) = H$ on $\tilde{\Omega}$. Then (U) implies the existence of $t\in \mathbb{R}$ such that $u=\tilde{u}+t$ on $\Omega$. By internal regularity of $\tilde{u}$, we infer that $u\in C^{1}(\overline\Omega)$. Fix now a function $\phi \in C^2(\R^{n})$ such that
\begin{equation}\label{eq:utile}
\H^{n-1} (\{ x\in \partial \Omega : \ \phi(x) - u(x) \neq s\}) >0\qquad \forall\, s\in \R.
\end{equation}
The choice of $\phi$ satisfying \eqref{eq:utile} can be easily made as follows: if $u$ is constant on $\de \Omega$, then one can choose any smooth function $\phi$ taking different values on two distinct points of $\de\Omega$; conversely, if $u$ is not constant on $\de\Omega$ then one can take $\phi=0$. 
Now we consider a minimizer $w$ of the functional
\[
\int_\Omega \sqrt{1 + |Dw|^2} + \int_\Omega Hw + \int_{\partial \Omega} |w - \phi|\ d\H^{n-1},
\]
then $w$ necessarily satisfies \eqref{PMC}. By the assumed uniqueness up to translations one has that $w = u+s$ for some $s\in \R$. Then it follows that 
\begin{equation}\label{tusottouno}
|Tu(x_{0})| = |Tw(x_{0})| <1. 
\end{equation}
Moreover by \eqref{eq:utile} we have that $w \neq \phi$ on some set $K\subset \partial^* \Omega$ with $\H^{n-1}(K)>0$. Fix now a point $x_0 \in K$ and assume without loss of generality that $\phi(x_0) > w(x_0)$. Set now $\C = \Omega\times \R$, $p_{0} = (x_{0},w(x_{0}))\in \de \C$, and notice that by the continuity of $w$ and $\phi$ on $\partial \Omega$ there exists $R>0$ such that the subgraph of $\phi$ contains the ball $B_R(p_0)\subset \R^{n+1}$. Owing to the choice of $B_{R}(p_{0})$, the epigraph
\[
W := \{ p=(x,y)\in \C:\ y>w(x)\} 
\]
necessarily minimizes the functional
\[
P(W;B_{R}(p_{0})) - \int_{W\cap B_{R}(p_{0})} H 
\]
with obstacle $ \R^{n+1}\setminus \C$ inside $B_{R}(p_{0})$. In other words, for any set $U$ that coincides with $W$ outside the set $A:=B_R(p_0) \cap \overline{\C}$, one has
\begin{equation}\label{eq:minimalitadiW}
P(W;B_{R}(p_{0})) - \int_{W\cap B_{R}(p_{0})} H \leq P(U;B_{R}(p_{0})) - \int_{U\cap B_{R}(p_{0})} H .
\end{equation}
It is then easy to show that $W$ is a $(\Lambda, R)$-perimeter minimizer in $\overline \C$ (see Definition \ref{def:LambdaMinimizer}), where $R$ is the radius of the ball defined above and $\Lambda = \sup_{\Omega} |H|$. Indeed for any ball $B_r \subset B_{R}(p_{0})$ and any set $U$ such that $U\Delta W \subset \subset B_r \cap \overline{\C}$, by \eqref{eq:minimalitadiW} one has that 
\begin{align*}
P(W; B_r) &= P(W;B_{R}(p_{0})) - P(W;B_{R}(p_{0}) \setminus \overline{B_{r}})\\\nonumber
&\leq P(U;B_{R}(p_{0}))- P(U;B_{R}(p_{0}) \setminus \overline{B_{r}}) - \int_{B_{R}(p_{0})} H(\chi_U - \chi_W) \nonumber \\
& \leq P(U; B_r) + \sup_{\Omega}|H|\, |U\Delta W|,
\end{align*}
which proves the $(\Lambda, R)$-minimality of $W$ in $\overline{\C}$. Then by Theorem \ref{thm:lambdamin} we infer that $\nu_{W}(p_{0}) = \nu_{\C}(p_{0})$, which contradicts \eqref{tusottouno}.
\medskip

\noindent
\textit{Step five: (V) and (V') are equivalent.} We can consider the sequence $\Omega_{j}$ of Theorem \ref{thm:interiorapprox} and apply Theorem \ref{thm:GaussGreen} to get
\begin{equation}\label{EDequiv}
\int_{\Omega\setminus\Omega_{j}} H(x)\, dx = \int_{\Omega\setminus\Omega_{j}} \div Tu(x)\, dx = \int_{\de\Omega}[Tu\cdot\nu]\, d\H^{n-1} - \int_{\de\Omega_{j}} Tu \cdot \nu_{j}\, d\H^{n-1}\,.
\end{equation}
Now, observing that the left-hand side of \eqref{EDequiv} is infinitesimal as $j\to\infty$ the equivalence between (V) and (V') is immediate. 

The proof is finally completed by combining the previous five steps. 
\end{proof}

We now show a well-known consequence of Lemma \ref{lem:maxprinciple}, which can be obtained by arguing as in Step two of the proof of Theorem \ref{thm:ACE}. 
\begin{prop}\label{prop:uboundedbelow}
Assume that $u$ is a solution of \eqref{PMC} on $\Omega$ and that either (V) or (V') holds. Then $u$ is bounded from below.
\end{prop}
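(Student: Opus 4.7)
By Theorem \ref{thm:ACE}, the hypothesis (V) or (V') implies all five equivalent properties (E), (U), (M), (V), (V'). In particular, up to replacing the pair $(u,H)$ by $(-u,-H)$, I may assume $\int_\Omega H = P(\Omega)$, so that (V) reads $[Tu\cdot \nu]=1$ $\H^{n-1}$-a.e.\ on $\de\Omega$, and (V') provides the integral identity
\[
\lim_{i\to\infty}\int_{\de\Omega_i}(1-Tu\cdot \nu_i)\, d\H^{n-1}=0
\]
along a smooth approximating sequence $\Omega_i\subset\subset\Omega$ produced by Theorem \ref{thm:interiorapprox}. This is precisely the boundary hypothesis of Lemma \ref{lem:maxprinciple} when one takes $\Gamma_1=\emptyset$.

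The plan is to mimic the enlargement construction of Step two of the proof of Theorem \ref{thm:ACE} and produce a bounded comparison function $v$ to which the maximum principle applies. First, fix a ball $B\supset\supset\Omega$ and apply Theorem \ref{thm:interiorapprox} to the finite-perimeter set $V=B\setminus\overline\Omega$ (for which $P(V)=P(B)+P(\Omega)=\H^{n-1}(\de V)$ thanks to \eqref{eq:weakmildreg}) to obtain a smooth $V_\delta\subset\subset V$; then set $\tilde\Omega:=B\setminus(V_\delta\cup\mathcal N_{2\delta}(\de B))$, which is a smooth bounded domain with $\Omega\subset\subset \tilde\Omega$ and $|\tilde\Omega\setminus\Omega|<\delta$. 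Next, choose a continuous extension $\tilde H$ of $H$ on $\tilde\Omega$ with $\tilde H\ge H$ on $\Omega$ and designed so that $(\tilde\Omega,\tilde H)$ is non-extremal in the sense of \eqref{eq:necessarycondition}. Theorem \ref{thm:subextremalexistence}, combined with the smoothness of $\de\tilde\Omega$ and classical regularity up to the boundary, then yields a solution $v\in C^2(\tilde\Omega)\cap L^\infty(\overline{\tilde\Omega})$ of $\div Tv=\tilde H$ on $\tilde\Omega$; since $\Omega\subset\subset\tilde\Omega$, $v$ is in particular bounded on $\Omega$.

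The argument is completed by applying Lemma \ref{lem:maxprinciple}(b) on $\Omega$ to the pair $(u,v|_\Omega)$ with $\Gamma_1=\emptyset$. The differential inequality $\div Tu=H\le \tilde H=\div Tv$ holds in $\Omega$ by construction, the regularity hypotheses are satisfied, and the required boundary condition on $u$ is exactly (V'). One concludes $u=v|_\Omega + c$ on $\Omega$ for some $c\in\R$, so the boundedness of $v$ transfers to $u$; in particular, $u$ is bounded from below.

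\textbf{Main obstacle.} The technical crux is the design of the extension $\tilde H$. Since $(\Omega,H)$ is extremal, the naive choice $\tilde H=H$ on $\Omega$ realizes equality $|\int_\Omega \tilde H|=P(\Omega)=P(A)$ at $A=\Omega\subsetneq\tilde\Omega$, obstructing the strict inequality in \eqref{eq:necessarycondition} and preventing the direct application of Theorem \ref{thm:subextremalexistence}. The resolution exploits the strict inclusion $\Omega\subset\subset\tilde\Omega$ and the freedom to act on the thin collar $\tilde\Omega\setminus\Omega$: one introduces a suitable negative contribution on $\tilde\Omega\setminus\Omega$ together with a controlled modification of $H$, balancing the parameters $\delta$ (the width of the collar) and the size of the extension so that the strict necessary condition holds for every measurable subset of $\tilde\Omega$, particularly those straddling $\de\Omega$. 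This balancing argument parallels the parameter-matching carried out in Step two and constitutes the main technical content of the proof.
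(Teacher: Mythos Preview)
Your approach has a genuine obstruction that cannot be repaired by ``balancing on the collar''. You commit to an extension $\tilde H$ with $\tilde H\ge H$ on $\Omega$ (this is forced by the differential inequality needed in Lemma~\ref{lem:maxprinciple}), and you want the pair $(\tilde\Omega,\tilde H)$ to satisfy the strict necessary condition \eqref{eq:necessarycondition} for every $A\subset\tilde\Omega$. But take $A=\Omega$: extremality gives $\int_\Omega H=P(\Omega)$, so $\int_\Omega\tilde H\ge P(\Omega)=P(A)$, and the strict inequality $\big|\int_A\tilde H\big|<P(A)$ fails. No modification on $\tilde\Omega\setminus\Omega$ can fix this, since the offending test set lies entirely inside $\Omega$. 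There is also a red flag in the conclusion you aim for: Lemma~\ref{lem:maxprinciple}(b) would give $u=v|_\Omega+c$, hence $u$ bounded on both sides; but extremal solutions are in general unbounded above (they are ``vertical'' at $\de\Omega$), so the argument would prove too much.

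The paper avoids this by \emph{removing} a ball rather than enlarging $\Omega$. One fixes $B\subset\subset\Omega$ and works on $S=\Omega\setminus\overline B$. Since $S\subsetneq\Omega$, Proposition~\ref{prop:subsets} gives $\big|\int_S H\big|<P(S)$, so $(S,H)$ is genuinely non-extremal; then Lemma~\ref{lemmagiusti1.1} and the Step-two enlargement (applied to $S$, not to $\Omega$) yield a comparison solution $w\in C^1(\overline S)$ with the \emph{same} $H$. One then applies part~(a) of Lemma~\ref{lem:maxprinciple} on $S$ with $\Gamma_1=\de B\neq\emptyset$: after translating $w$ so that $w\le u$ on $\de B$ (possible since $u\in C^2(\overline B)$), the verticality of $u$ along $\de\Omega=\Gamma_2$ gives $u\ge w$ on $S$. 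Together with $u\in C^2(\overline B)$ this yields only the one-sided bound, as claimed.
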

\begin{proof}
Let $B$ denote a ball compactly contained in $\Omega$ and consider the open set $S = \Omega \setminus \overline{B}$. By Lemma \ref{lemmagiusti1.1} and arguing as in Step two of the proof of Theorem \ref{thm:ACE} we find a solution $w$ of \eqref{PMC} which is of class $C^{1}(\overline S)$. Since in particular $u\in C^{2}(\overline B)$ we can assume that $w\le u$ on $\partial B$ up to a vertical translation, hence by Lemma \ref{lem:maxprinciple}(a) we deduce that $w\le u$ on $S$, which gives the conclusion at once.
\end{proof}

We conclude the section with some remarks about the stability of solutions of \eqref{PMC} in the extremal case. One might ask whether or not there exists some perturbation $(\Omega_\e, H_\e)$ of an extremal pair $(\Omega, H)$, such that $(\Omega_\e, H_\e)$ satisfies the necessary condition \eqref{eq:necessarycondition} and the solution $u_{\e}$ of \eqref{PMC} on $\Omega_{\e}$ is in a suitable sense a small perturbation of $u$ up to translations, as soon as $\e$ is small. The following proposition contains a result in this direction.
\begin{prop}[Stability]\label{prop:stability}
Let $\{\Omega_{j}\}_{j}$ be a sequence of bounded domains and $\{H_j\}_j$ a sequence of Lipschitz functions, such that $\Omega_{j}$ is weakly regular and the pair $(\Omega_j, H_j)$ is extremal. Assume moreover that $\Omega_{j}\to \Omega_{\infty}$ in $L^{1}$ and $P(\Omega_{j})\to P(\Omega_{\infty})$, as $j\to\infty$, with $\Omega_{\infty}$ weakly regular, and that $H_{j}$ uniformly converges to $H_\infty$ such that the pair $(\Omega_\infty, H_\infty)$ is extremal as well. Then the sequence of unique (up to translations) solutions $\{u_j\}_j$ to the \eqref{PMC} problem for the pair $(\Omega_j, H_j)$ converges to a solution $u_{\infty}$ of \eqref{PMC} for the pair $(\Omega_{\infty},H_{\infty})$, in the sense of the $L^{1}_{loc}$-convergence of the epigraphs.
\end{prop}
\begin{proof}
Due to our hypotheses, the existence of a solution $u_{j}$ to \eqref{PMC} for the pair $(\Omega_{j},H_{j})$ (also for $j=\infty$) is guaranteed by Theorem \ref{thm:extremal}. Arguing as in Theorem \ref{thm:extremal}, for any $j$ large enough we can find a suitable $t_j$ such that the translated solution $u_j + t_j$ which we just rename $u_j$ satisfies
\[
\min\Big(|\{x\in \Omega_{j}:\ u_{j}(x)\ge 0\}|,\ |\{x\in \Omega_{j}:\ u_{j}(x)\le 0\}|\Big)\ \ge\ |\Omega|/4\,.
\]
Then we find that the epigraphs $U_j$ of $u_j$ converge in $L^{1}_{loc}(\R^{n+1})$ to a set $U_{\infty}^{*}$ which is the epigraph of a classical solution $u_{\infty}^{*}$ defined on $\Omega_{\infty}$. 
By Theorem \ref{thm:ACE} we have that $u_{\infty}^{*} = u_{\infty}$ up to a translation, thus the thesis follows. 
\end{proof}

In the recent paper \cite{LeoSar2-2016}, an explicit example of an extremal pair $(\Omega, H)$ and of a sequence of extremal pairs $(\Omega_j, H_j)$ satisfying the hypotheses of Proposition \ref{prop:stability} is constructed, for the special case $H_j = P(\Omega_j)/|\Omega_j|$, by removing a sequence of smaller and smaller disks from the unit disk in $\R^{2}$, in such a way that it looks like a sort of Swiss cheese with holes accumulating towards a portion of its boundary (see Figure \ref{fig:poroso} and Example \ref{ex:porosita} below; for a more complete discussion we refer to \cite{LeoSar2-2016}). 

This shows the following, remarkable fact: while a generic small and smooth perturbation of the unit disk may produce a dramatic change in the capillary solution (and even end up with non-existence of a solution), there exist some non-smooth perturbations that, instead, preserve both existence and stability.

\begin{ex}\label{ex:porosita}
\begin{figure}[t]
\centering
\includegraphics[trim={1.5cm 5cm 1.5cm 4.5cm},clip, width=\textwidth]{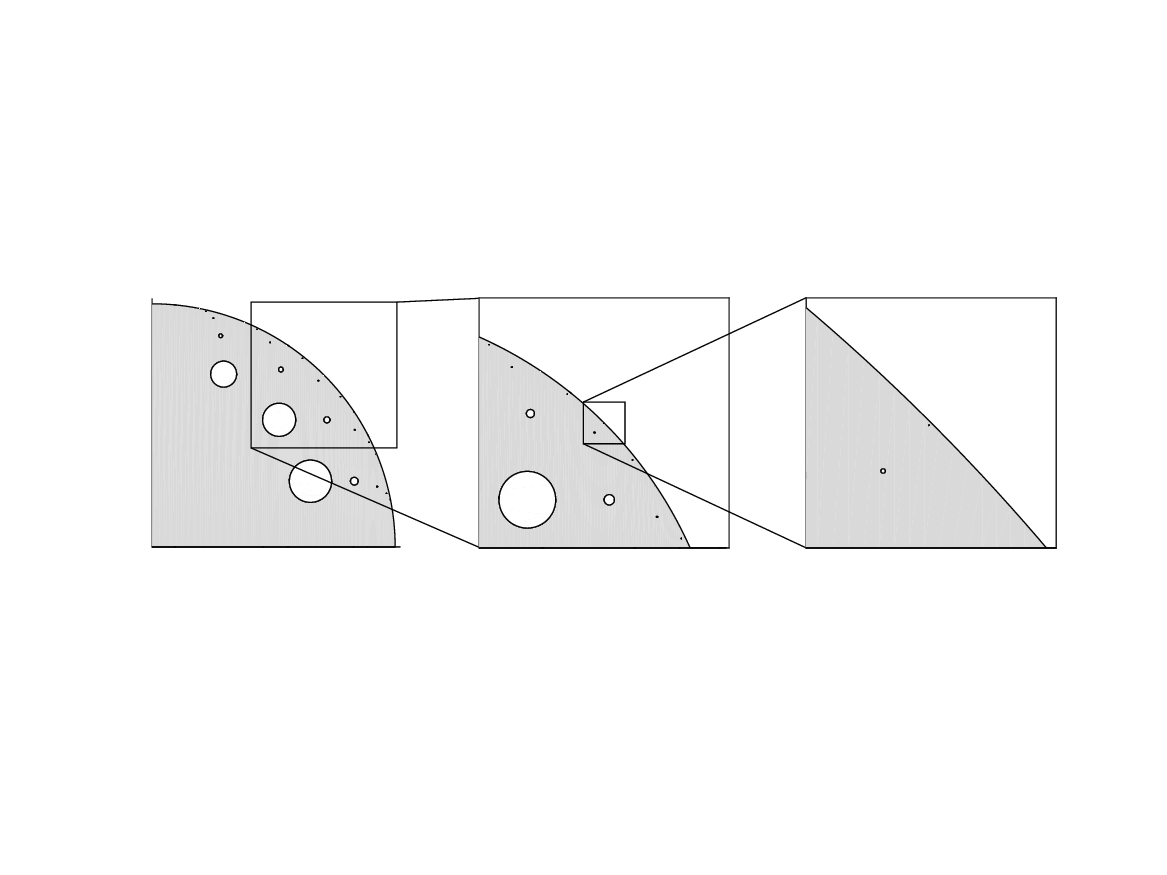} 
\caption{The ``Swiss cheese'' set $O_{a,\delta,\e}$ constructed in Example \ref{ex:porosita}
}
\label{fig:poroso}
\end{figure}
Let $0<\delta <\epsilon < 1$ and $a>1$ be fixed. For $i\geq 1$ and $j=1,\dots, i$ we set
\begin{equation*}
\rho_{ij} = 1- \frac{\epsilon}{a^{i^2+j}}, \qquad r_{ij} = \frac{\delta}{a^{2i^2+2j}}, \qquad \theta_{ij} = \frac{\pi}{2}\frac{j}{i+1},
\end{equation*}
Then we define
\[
O_{a,\delta,\e} = B_1 \setminus \bigcup_{i,j}\overline{B_{ij}},
\]
where $B_{1}\subset \R^{2}$ is the unit disk centered at the origin, and 
\[
B_{ij} := B_{r_{ij}} ((\rho_{ij}\cos(\theta_{ij}), \rho_{ij}\sin(\theta_{ij}))
\] 
(see Figure \ref{fig:poroso}). 
We prove in \cite{LeoSar2-2016} that for a suitable choice of parameters $a,\delta,\e$ the open set $O_{a,\delta,\e}$ fulfils the hypotheses of Theorem \ref{thm:ACE}.

One can then build a sequence of non-smooth perturbations of the unit disk by simply filling one hole of the Swiss cheese at a time: indeed this operation creates a sequence of  subdomains of the unit disk that satisfy the hypotheses of Proposition \ref{prop:stability} with the choice $H_j = P(\Omega_j)/|\Omega_j|$ and with $(B_{1},2)$ as the limit extremal pair.
\end{ex}


\bibliographystyle{plain}


\end{document}